\begin{document}
\title{Isogeometric discretizations of the Stokes problem on trimmed geometries}\thanks{We gratefully thank Annalisa Buffa for her teaching and advice and Rafael V\'azquez Hern\'andez for the many discussions and his precious support.}\thanks{This work was partially supported by ERC AdG project CHANGE n. 694515.}
\author{Riccardo Puppi}\address{Chair of Modelling and Numerical Simulation, \'Ecole Polytechnique F\'ed\'erale de Lausanne, Station 8, 1015 Lausanne, Switzerland.}
%
\date{\today}
\begin{abstract} 
The isogeometric approximation of the Stokes problem in a trimmed domain is studied. This setting is characterized by an underlying mesh unfitted with the boundary of the physical domain making the imposition of the essential boundary conditions a challenging problem. A very popular strategy is to rely on the so-called Nitsche method~\cite{MR3264337}. We show that the Nitsche method lacks stability in some degenerate trimmed domain configurations, potentially polluting the computed solutions. After extending the stabilization procedure of~\cite{MR4155233} to incompressible flow problems, we show that we recover the well-posedness of the formulation and, consequently, optimal a priori error estimates. Numerical experiments illustrating stability and converge rates are included.

 \end{abstract}
%
%
\subjclass{65N12,65N30,65N85}
\keywords{isogeometric analysis, IGA, CAD, CutFEM, trimming, unfitted, Nitsche, finite element, Raviart-Thomas, N\'ed\'elec, Taylor-Hood, inf-sup, stability, stabilization}
\maketitle
\section*{Introduction}
In Computer Aided Design (CAD) complex geometries are usually constructed as a set of simple spline, or more generally NURBS, geometries plus a set of Boolean operations, among which there is set difference, commonly called \emph{trimming} in this context. Isogeometric analysis (IGA)~\cite{MR2152382} is a numerical method for approximating the solutions of PDEs to allieviate the need to remodel the geometries via a tetrahedral or hexahedral mesh, one of the major bottlenecks of the classical Finite Element Method (FEM). Following the \emph{isoparametric paradigm} we wish to use the same basis functions employed for the parameterization of the geometric domain for the discretization and analysis of the differential problem taken into consideration.
In recent years, great progress has been made in the domain of IGA to improve the usability of CAD geometries in the solution of PDEs and, in this respect, volumetric representations (V-rep) are a major contribution~\cite{MR3513034,MR3982623}. However further efforts are needed for it to be considered a mature field. In this respect trimming is a major challenge for isogeometric methods: on one hand, it is a fundamental element in the design phase of the geometric domain, while on the other, it is an obstacle to the analysis, in particular to the development of robust and reliable solution methods. For further details about the challenges of trimming in IGA we refer the interested readers to the review article~\cite{MR3867694} and the references therein. 

In practice, the simplest trimmed geometry is an object endowed with a tensor product mesh which is cut by an arbitrary line in 2D or surface in 3D. Similar challenges were considered, already long ago (see the pioneering work of Barrett and Elliott~\cite{MR752608,MR853660}), in the context of FEM.

Hence all the techniques developed by the finite element community come now to help. Among the most successful approaches relying on a solid mathematical foundation, there is CutFEM, a construction developed in the last decade by Burman and collaborators~\cite{MR3416285,MR2738930}.

Let us outline the setting we consider in this paper. We set $\Omega_0 \subset \mathbb R^d$ (here $d=2,3$), a domain parametrized by a bijective spline map $\F : \left(0, 1\right)^d \to \Omega_0$, i.e., a \emph{patch} in the isogeometric terminology, and let $\Omega_1,\dots,\Omega_N$ be Lipschitz domains in $\mathbb R^d$ . We assume that $\Omega_i$, $i=1,\dots,N$,  are to be cut away from $\Omega_0$ and that our computational domain reads:
\begin{equation}\label{trimmed_domain}
	\Omega = \Omega_0\setminus\bigcup_{i=1}^N \overline\Omega_i.
\end{equation}
In this paper, we contribute to the development of robust isogeometric numerical methods for PDEs in a trimmed domain such as~\eqref{trimmed_domain}.  It is known that the main sources of issues are: integration, conditioning, and stability~\cite{MR4155233}. Here, we address the stability of incompressible flow problems in trimmed geometries. For what concerns integration, namely the construction of suitable quadrature rules on the cut elements, we rely on the technique developed in~\cite{MR3982623}. The conditioning issue is also out of the scope of this work: herein we limit ourselves to applying a block diagonal rescaling of the basis functions. Our contribution lies on the theoretical side and extends the analysis developed in~\cite{MR4155233} to incompressible flow problems. We introduce the Raviart-Thomas, N\'ed\'elec, and Taylor-Hood mixed isogeometric elements (first studied in~\cite{MR2808112}) and adapt their definitions to trimmed domains. For the weak imposition of the Dirichlet boundary conditions, we rely on Nitsche's method. After empirically demonstrating the lack of stability of the Nitsche formulation, we propose our stabilization. On the one hand, just as in the elliptic case, we modify the evaluation of the normal derivatives of the velocities at the ``badly'' cut elements. At the same time, we apply the stabilization to the whole space of pressures, by substituting the degrees of freedom (DOFs) at the bad elements with a linear combination from a ``good'' neighboring element. Let us observe that a mathematical proof of the inf-sup condition for the stabilized formulation is still missing. However, numerical experiments indicate that our approach works.

Let us now briefly review the literature about isogeometric methods on trimmed domains. First of all, we observe that most of the contributions to this topic propose to weakly enforce Dirichlet boundary conditions using the Nitsche method~\cite{MR1365557} and they adopt the adaptive quadrature strategy developed in~\cite{RANK2012104}. For instance, let us refer to the papers in immersogeometric methods for fluid-structure interactions~\cite{MR3310316,MR3589879} where the structure is immersed in the background mesh of the fluid which is cut in an arbitrary fashion. In a series of articles~\cite{MR3610105,MR3801783,MR3873865} Hoang et al. employ different families of stable (in the boundary fitted case) isogeometric elements, namely the Taylor-Hood element~\cite{MR2250029,MR2808112,MR2846758,MR3047946}, the Raviart-Thomas element~\cite{MR2808112,MR2792397,MR3021778}, the N\'ed\'elec element~\cite{MR2808112} and the Subgrid element~\cite{MR3047946}, and empirically show that the well-posedness and good conditioning in the unfitted case can be recovered by adding to the variational formulation a consistent term penalizing the jumps of high-order derivatives of the pressure, in the spirit of~\cite{MR2571349,MR3264337}.  In~\cite{MR3610101} the authors develop an overlapping Additive-Schwarz preconditioner tailored for incompressible flow problems, without addressing the stability issue.

The manuscript is structured as follows. After having introduced in Section~1 some notations and the strong formulation of the Stokes problem, in Section~\ref{stokes_trimming:sec:ig} we provide the basic notions on IGA: we introduce three families of isogeometric elements (Raviart-Thomas, N\'ed\'elec, and  Taylor-Hood) and use them to discretize the considered equations using Nitsche's method for the imposition of the essential boundary conditions. In Section~\ref{stokes_trimming:section:lack_stab} we show through numerical experiments that the Nitsche formulation is not stable for the trimming operation. Then, in Section~\ref{stokes_trimming:sec:stabilization} we introduce our stabilized Nitsche formulation and then move on to its numerical analysis in Section~\ref{stokes_trimming:section5}. We demonstrate that the stabilized formulation is well-posed and, in Section~\ref{stokes_trimming:sec:apriori}, that we have optimal a priori error estimates. In Section~\ref{stokes_trimming:sec:numerical_experiments} we provide several numerical experiments to validate the effectiveness of our stabilization procedure.

\section{Notation and model problem}\label{stokes_trimming:sec:notation}
We briefly introduce some useful notations for the forthcoming analysis. Let $D$ be a Lipschitz-regular \emph{domain} (subset, open, bounded, connected) of $\mathbb R^d$, $d\in\{2,3\}$. Let $L^2(D)$ denote the space of square integrable functions on $D$, equipped with the usual norm $\norm{\cdot}_{L^2(D)}$. Let $L^2_0(D)$ be the subspace of $L^2(D)$ of functions with zero average. We denote by $H^n(D)$, for $n\in\N$, the standard Sobolev space of functions in $L^2(D)$ whose $n$-th order weak derivatives belong to $L^2(D)$. We define, for $\varphi:D\to \R$ sufficiently regular and $\bm\eta$ multi-index with $\abs{\bm\eta}:=\sum_{i=1}^d\eta_i$, $D^{\bm\eta}\varphi:=\displaystyle{\frac{\partial^{\abs{\bm\eta}}\varphi}{\partial x_1^{\eta_1}\dots\partial x_d^{\eta_d}}}$, $\norm{\varphi}^2_{H^n(D)}:=\sum_{\abs{\bm \eta}\le n} \norm{D^{\bm\eta}\varphi}^2_{L^2(D)}$. Sobolev spaces of fractional order $H^k(D)$, $k\in\R$, can be defined by interpolation techniques, see~\cite{MR2424078}. Let us denote $\bm L^2(D):=\left(L^2(D)\right)^d$ and $\bm H^k(D):= \left( H^k(D) \right)^d$. We define the Hilbert space $\bm H(\dive;D)$ of vector fields in $\bm L^2(D)$ with divergence in $L^2(D)$, endowed with the graph norm $\norm{\cdot}_{H(\dive;D)}$. Let $H^\frac{1}{2}(\partial D)$ be the range of the trace operator of functions in $H^1(D)$ and, for a non-empty open subset of the boundary $\omega$, we define its restriction $H^{\frac{1}{2}}(\omega)$. Both $H^\frac{1}{2}(\partial D)$ and $H^{\frac{1}{2}}(\omega)$ can be endowed with an intrinsic norm, see~\cite{MR2328004}. The dual space of $H^{\frac{1}{2}}(\omega)$ is denoted $H^{-\frac{1}{2}} (\omega)$. Finally, let $\bm H^\frac{1}{2}(\partial D):= \left( H^\frac{1}{2}(\partial D) \right)^d$, $\bm H^{\frac{1}{2}}(\omega):= \left( H^{\frac{1}{2}}(\omega) \right)^d$ and $\bm H^{-\frac{1}{2}} (\omega):= \left( H^{-\frac{1}{2}} (\omega) \right)^d$.

 For the sake of convenience, we are going to employ the same notation $\abs{\cdot}$ for the volume (Lebesgue), the surface (Hausdorff) measures of $\R^d$ and the cardinality of a set. We also denote as $\mathbb Q_{r,s,t}$ the vector space of polynomials of degree at most $r$ in the first variable, at most $s$ in the second and at most $t$ in the third one (analogously for th case $d=2$), $\mathbb P_u$ the vector space of polynomials of degree at most $u$. We may write $\mathbb Q_k$ instead of $\mathbb Q_{k,k}$ or $\mathbb Q_{k,k,k}$. Given $E\subset \R^d$, the notation $\operatorname{int} E$ denote its interior.

Let us assume $\Omega$ to be a Lipschitz domain obtained via trimming operations, as in~\eqref{trimmed_domain} with $N=1$, namely $\Omega=\Omega_0\setminus \overline\Omega_1$. Let $\Gamma$ be its boundary such that $\Gamma=\overline\Gamma_D\cup\overline\Gamma_N$, where $\Gamma_D$ and $\Gamma_N$ are non-empty, open, and disjoint. We denote the \emph{trimming curve} (respectively surface if $d=3$) as $\Gamma_{T}=\Gamma \cap \partial \Omega_1$.

 Note that throughout this document $C$ will denote generic constants that may change at each occurrence, but that are always independent of the local mesh size and on the way the background mesh is cut by the trimming curve, unless otherwise specified. The equivalence $cx\le y\le Cx$ is denoted by $x\sim y$.

The Stokes equations are a linear system that can be derived as a simplification of the Navier-Stokes equations. They describe the flow of a fluid under incompressibility and slow motion regimes. Given the body force $\f:\Omega\to\R^d$, the mass production rate $g:\Omega\to\R$, the Dirichlet datum $\u_D:\Omega\to\R$ and the Neumann datum $\bm\sigma_N:\Omega\to\R$, we look for the \emph{velocity} $\u:\Omega\to\R^d$ and \emph{pressure} $p:\Omega\to\R$ such that
\begin{equation}
\begin{aligned}\label{stokes_trimming:stokes}
-\dive \bm\sigma(\u,p) = \f,  \qquad&\text{in}\;\Omega, \\
\dive \u = g,\qquad &\text{in}\;\Omega, \\
\u  = \u_D,\qquad &\text{on}\;\Gamma_D,\\
\bm\sigma(\u,p) \n   = \bm\sigma_N,\qquad &\text{on}\;\Gamma_N,
\end{aligned}
\end{equation}
where $\mu\in\R$, $\mu>0$ is the \emph{viscosity coefficient}, $\bm\sigma(\u,p)= \mu D\u - p \mathbf{I}$ is the \emph{Cauchy stress tensor}. The first equation is known as the \emph{conservation of the momentum} and is nothing else than the Newton's Second Law, relating the external forces acting on the fluid to the rate of change of its momentum, the second one is the \emph{conservation of mass} (when $g\equiv 0$). In what follows, we assume $\mu\equiv 1$ for the sake of simplicity of the notation.
\section{The isogeometric discretization}\label{stokes_trimming:sec:ig}
\subsection{Univariate B-splines}
 For a more detailed introduction to isogeometric analysis, we refer the interested reader to the review article~\cite{MR3202239}. Given two positive integers $k$ and $n$, we say that $\Xi:=\{\xi_1,\dots,\xi_{n+k+1} \}$ is a \emph{k-open knot vector} if
\begin{equation*}
\xi_1=\dots=\xi_{k+1}<\xi_{k+2}\le\dots \le \xi_n<\xi_{n+1}=\dots=\xi_{n+k+1}.
\end{equation*}
We assume $\xi_1=0$ and $\xi_{n+k+1}=1$. We also introduce $Z:=\{\zeta_1,\dots,\zeta_M \}$, the set of \emph{breakpoints}, or knots without repetitions, which forms a partition of the interval $(0,1)$. Note that
$$
\Xi=\{\underbrace{\zeta_1,\dots,\zeta_1}_{m_1\;\text{times}},\underbrace{\zeta_2,\dots,\zeta_2}_{m_2\;\text{times}},\dots,\underbrace{\zeta_M,\dots,\zeta_M }_{m_M\;\text{times}}\},
$$
where $m_j$ is the multiplicity of the breakpoint $\zeta_j$ and $\sum_{i=1}^Mm_i=n+k+1$. Moreover, we assume $m_j\le k$ for every internal knot and we denote $I_i:=(\zeta_i,\zeta_{i+1})$ and its measure $h_i:=\zeta_{i+1}-\zeta_{i}$, $i=1,\dots, M-1$.

We denote as $\hat{B}_{i,k}:[0,1]\to\R$ the i-th \emph{B-spline} of degree $k$, $1\le i\le n$, obtained using the \emph{Cox-de Boor formula}, see for instance~\cite{MR3202239}. Moreover, let $S^k_{\bm \alpha}(\Xi):=\operatorname{span}\{\hat{B}_{i,k}: 1\le i\le n \}$ be the vector space of univariate splines of degree $k$, which can also be characterized as the space of piecewise polynomials of degree $k$ with $\alpha_j:=k-m_j$ continuous derivatives at the breakpoints $\zeta_j$, $1\le j\le M$ (the \emph{Curry-Schoenberg Theorem}). The number of continuous derivatives at the breakpoints are collected in the \emph{regularity vector} $\bm\alpha:=\left(\alpha_j \right)_{j=1}^M$. A knot multiplicity $m_j=k+1$ corresponds to a regularity $\alpha_j=-1$, \emph{i.e.}, a discontinuity at the breakpoint $\zeta_j$. Since the knot vector is open, it holds $\alpha_1=\alpha_M=-1$. For the sake of simplicity of the notation we assume that the basis functions have the same regularity at the internal knots, namely $\alpha_j=\alpha$ for $2\le j\le M-1$. 
 Moreover, given an interval $I_j=\left( \zeta_j,\zeta_{j+1}\right)=(\xi_i,\xi_{i+1})$, we define its \emph{support extension} $\tilde I_j$ as
\begin{equation*}
	\tilde I_j:=\operatorname{int}\bigcup\{\operatorname{supp}(\hat{B}_{\ell,k}): \operatorname{supp}(\hat{B}_{\ell,k})\cap I_j\ne\emptyset, 1\le \ell\le n \}=\left(\xi_{i-k},\xi_{i+k+1}\right).
\end{equation*}

\subsection{Multivariate B-splines}
Let $d\in\{2,3\}$ denote the space dimension and $M_\ell,n_\ell,k_\ell\in\N$, $\Xi_\ell=\{\xi_{\ell,1},\dots,\xi_{\ell,n_l+k+1} \}$, $Z_\ell=\{\zeta_{\ell,1},\dots,\zeta_{\ell,M_\ell} \}$ be given, for every $1\le \ell\le d$. We set the degree vector $\mathbf{k}:=(k_1,\dots,k_d)$, the regularity vectors $\bm\alpha_\ell$, $1\le \ell \le d$, and the multivariate knot-vector $\mathbf{\Xi}:=\Xi_1\times\dots\times\Xi_d$. As in the univariate case, we assume that the same regularity holds at the internal knots for every parametric direction, hence we drop the bold font once for all and write $\alpha_\ell$, $1\le \ell \le d$. Note that the breakpoints of $Z_\ell$ form a Cartesian grid in the parametric domain $\hat \Omega_0 := (0,1)^d$, namely the \emph{parametric Bézier mesh}
\begin{equation*}
\hat{\mathcal{M}}_{0,h}=\{Q_{\mathbf{j}}=I_{1,j_1}\times\dots\times I_{d,j_d}: I_{\ell,j_\ell}=(\zeta_{\ell,j_\ell},\zeta_{\ell,j_\ell+1}): 1\le j_\ell\le M_\ell-1 \},
\end{equation*}
where each $Q_{\mathbf{j}}$ is called \emph{parametric B\'ezier element}, with $h_{Q_\mathbf j}:=\operatorname{diam}(Q_\mathbf j)$. We require the following hypothesis that allows us to assign $h_Q$ as a unique measure to each element and to use the Bramble-Hilbert type results developed in~\cite{MR2250029}.
	\begin{assumption}\label{shape_regularity}
		The family of meshes $\{\hat{\mathcal M}_{0,h}\}_h$ is assumed to be \emph{shape-regular}, that is, the ratio between the smallest edge of $Q\in\hat{\mathcal M}_{0,h}$ and its diameter $h_Q$ is uniformly bounded with respect to $Q$ and $h$.
	\end{assumption}
	\begin{remark}
		The shape-regularity hypothesis implies that the mesh is \emph{locally-quasi uniform}, \emph{i.e.}, the ratio of the sizes of two neighboring elements is uniformly bounded (see \cite{MR2250029}).
\end{remark}
Let $\mathbf{I}:=\{\mathbf{i}=(i_1,\dots,i_d): 1\le i_\ell\le n_\ell \}$ be a set of multi-indices. For each $\i=(i_1,\dots,i_d)$, we define the set of \emph{multivariate B-splines}
$
\{\hat{B}_{\mathbf{i},\mathbf k}(\mathbf{\zeta})=\hat{B}_{i_1,k_1}(\zeta_1)\dots\hat{B}_{i_d,k_d}(\zeta_d): \mathbf{i}\in\mathbf{I}  \}.
$
Moreover, for a generic B\'ezier element $Q_j\in\hat {\mathcal M}_{0,h}$, we define its \emph{support extension} $\tilde Q_j=\tilde I_{1,j_1}\times\dots\times\tilde I_{d,j_d}$,
where $\tilde I_{\ell,j_\ell}$ is the univariate support extension of the univariate case defined above. The \emph{multivariate spline space} in $\hat{\Omega}_0$ is defined as
$
S^{\k}_{\alpha_1,\dots,\alpha_d}(\mathbf{\Xi})=\operatorname{span}\{\hat{B}_{\i,\k}(\zeta):\i\in\mathbf{I} \},
$
which can also be seen as the space of piecewise multivariate polynomials of degree $\k$ and with regularity across the internal B\'ezier elements given by $\alpha_1,\dots,\alpha_d$. Note that $S^{\k}_{\alpha_1,\dots,\alpha_d}(\mathbf{\Xi})=S^{\k}_{\alpha_1,\dots,\alpha_d}(\Xi_1,\dots,\Xi_d)=\bigotimes_{i=1}^d S^{k_i}_{\alpha_i}(\Xi_{i})$.
\begin{remark}
	What has been said so far can be easily generalized to the case of Non-Uniform Rational B-Splines (NURBS) basis functions. See, for instance,~\cite{MR3618875}. 
\end{remark}	
\subsection{Isogeometric spaces for the Stokes problem on trimmed geometries}\label{47}
As already said in the introduction, the principle of isogeometric methods is to assume the physical untrimmed domain $\Omega_0$ to be the image of the unit $d$-dimensional cube through $\mathbf{F}\in \left(S^{\k}_{ \alpha_{1},\dots,\alpha_d}(\mathbf{\Xi}) \right)^d$, namely $\Omega_0=\mathbf{F}(\hat{\Omega}_0)$. Note that the \emph{isogeometric map} $\F$ is given from the CAD description of the geometry. We define the \emph{physical B\'ezier mesh} as the image of the elements in $\hat{\mathcal M}_{0,h}$ through $\F$, $\mathcal M_{0,h}:=\{K:K=\F(Q),\ Q\in\hat {\mathcal M}_{0,h}\}$. We denote $h_K:=\operatorname{diam} (K)$, and the support extension $\tilde K:=\F(\tilde Q)$, for each $K\in\mathcal M_{0,h}$ such that $K=\F(Q)$.
To prevent the existence of singularities in the parametrization we make the following assumption.
\begin{assumption}\label{1}
	The parametrization $\mathbf{F}:\hat{\Omega}_0\to\Omega_0$ is bi-Lipschitz. Moreover, $\restr{\mathbf{F}}{\overline{Q}}\in C^\infty(\overline{Q})$, for every $Q\in\hat{\mathcal{M}}_{0,h}$, and $\restr{\mathbf{F}^{-1}}{\overline{K}}\in C^\infty(\overline{K})$, for every $K\in\mathcal{M}_{0,h}$.
\end{assumption}
Some consequences of Assumption \ref{1} are the following.
\begin{enumerate}
	\item $h_Q\approx h_K$, \emph{i.e.}, there exist $C_1>0, C_2>0$ such that $C_1 h_K\le h_Q \le C_2 h_K$, for every $K\in\mathcal M_{0,h}$, $Q\in\hat{\mathcal M}_{0,h}$, $K=\F(Q)$.
	\item There exists $C>0$ such that, for every $Q\in\hat{ \mathcal{M}}_{0,h}$ such that $\mathbf{F}(Q)=K$, it holds $\norm{D\mathbf{F}}_{L^{\infty}(Q)}\le C$ and $\norm{D\mathbf{F}^{-1}}_{L^{\infty}(K)}\le C$.
	\item There exist $C_1>0, C_2>0$  such that, for every ${\bm \zeta} \in \hat \Omega_0$, $C_1\le\abs{\operatorname{det}(D\mathbf F( {\bm \zeta}))}\le C_2$.
\end{enumerate}
Moreover, note that Assumption~\ref{1} implies that if the parametric mesh is shape-regular,
then the physical mesh is shape-regular too. With an abuse of notation, we may denote
$h := \max_{K\in\mathcal M_{0,h}} h_K$, even if the same symbol has been used for the maximum diameter of the
parametric mesh. 

To define the isogeometric spaces in the parametric domain, we need to resort to the following cumbersome notation. For every $1\le \ell\le d$, let $\Xi_\ell$ be a knot vector of degree $k$ and regularity vector $\alpha$, with $\bm\Xi=\Xi_1\times \dots\times \Xi_\ell$ is the knot vector used for the geometry. We construct $\check{\Xi}_\ell$, a knot vector of degree $k+1$ and regularity $\alpha+1$,  from $\Xi_\ell$ by adding one repetition of the first and last knots. By increasing the multiplicity of the internal knots $\check\Xi_\ell$ by one, we obtain $\tilde\Xi_\ell$, a knot vector of degree $k+1$ and regularity $\alpha$.
\begin{align*}
	\hat V_{0,h}^\text{RT}:=&
	\begin{cases}
		S^{k+1,k}_{ \alpha+1,\alpha}(\check\Xi_1,\Xi_2)\times S^{k,k+1}_{\alpha,\alpha+1}(\Xi_1,\check\Xi_2),&\text{if}\;d=2,\\
		S^{k+1,k,k}_{\alpha+1,\alpha, \alpha}(\check\Xi_1,\Xi_2,\Xi_3)\times S^{k,k+1,k}_{\alpha,\alpha+1,\alpha}(\Xi_1,\check\Xi_2,\Xi_3)\times S^{k,k,k+1}_{\alpha,\alpha,\alpha+1}(\Xi_1, \Xi_2,\check\Xi_3),\;\;\;\;\;\qquad\qquad\qquad&\text{if}\;d=3,
	\end{cases}\\
	\hat V_{0,h}^\text{N}:=&
	\begin{cases}
		S^{k+1,k+1}_{\alpha+1,\alpha}(\check\Xi_1,\tilde\Xi_2)\times S^{k+1,k+1}_{\alpha,\alpha+1}(\tilde\Xi_1,\check\Xi_2),&\text{if}\;d=2,\\
		S^{k+1,k+1,k+1}_{\alpha+1,\alpha,\alpha}(\check\Xi_1,\tilde\Xi_2,\tilde\Xi_3)\times S^{k+1,k+1,k+1}_{\alpha,\alpha+1,\alpha}(\tilde\Xi_1,\check\Xi_2,\tilde\Xi_3)\times S^{k+1,k+1,k+1}_{\alpha,\alpha,\alpha+1}(\tilde\Xi_1,\tilde\Xi_2,\check\Xi_3),&\text{if}\;d=3,
	\end{cases}\\
	\hat V_{0,h}^\text{TH}:=&
	\begin{cases}
		S^{k+1,k+1}_{\alpha,\alpha}(\tilde\Xi_1,\tilde\Xi_2)\times S^{k+1,k+1}_{\alpha,\alpha}(\tilde\Xi_1,\tilde\Xi_2),&\text{if}\;d=2,\\
		S^{k+1,k+1,k+1}_{\alpha,\alpha,\alpha}(\tilde\Xi_1,\tilde\Xi_2,\tilde\Xi_3)\times S^{k+1,k+1,k+1}_{\alpha,\alpha,\alpha}(\tilde\Xi_1,\tilde\Xi_2,\tilde\Xi_3)\times S^{k+1,k+1,k+1}_{\alpha,\alpha,\alpha}(\tilde\Xi_1,\tilde\Xi_2,\tilde\Xi_3),&\text{if}\;d=3,
	\end{cases}\\
	\hat Q_{0,h}:=&
	\begin{cases}
		S^{k,k}_{\alpha,\alpha}(\Xi_1,\Xi_2), \qquad&\text{if}\;d=2,\\
		S^{k,k,k}_{\alpha,\alpha,\alpha}(\Xi_1,\Xi_2,\Xi_3), \qquad&\text{if}\;d=3.
	\end{cases}
\end{align*}
It holds $\hat V_h^\text{RT}\subset \hat V_h^\text{N}\subset \hat V_h^\text{TH}$, see~\cite{MR2808112}.

We observe that the previous construction as well as what follows could be done in a more general setting, by considering different degrees and regularities for each parametric direction.
We also note that, for $\alpha=-1$, $\hat V^\text{RT}_h$ and $\hat V^\text{N}_h$ recover the classical Raviart-Thomas finite element and N\'ed\'elec finite element of the second kind, respectively. For $\alpha=0$, $\hat V_h^\text{TH}$ represents the classical Taylor-Hood finite element space. Henceforth we assume $\alpha\ge 0$, otherwise $\hat V_h^\square$, $\square\in\{\text{RT},\text{N},\text{TH}\}$, is a discontinuous space (of \emph{jump type}) and it does not provide a suitable discretization for the velocity solution of the Stokes problem, since it is not $\bm H^{1}$-conforming.

The isogeometric spaces in the untrimmed domain $\Omega_0$ read as follows:
\begin{equation*}
	\begin{aligned}
		V_{0,h}^{\text{RT}} :=& \{\vv_h:\iota_v \left(\vv_h\right) \in \hat V_{0,h}^\text{RT} \},\;
		V_{0,h}^{\text{N}} := \{\vv_h:\iota_v \left(\vv_h\right) \in \hat V_{0,h}^\text{N} \},\;
		V_{0,h}^{\text{TH}} :=\{\vv_h:\vv_h\circ\F \in \hat V_{0,h}^\text{TH} \},\;\\
		Q^{\text{RT}}_{0,h} =& Q^{\text{N}}_{0,h}:=  \{q_h :\iota_p(q_h)  \in \hat Q_{0,h} \},\; Q^{\text{TH}}_{0,h} := \{q_h :q_h\circ\F  \in \hat Q_{0,h} \},
	\end{aligned}
\end{equation*}
where $\iota_v$ and $\iota_p$ are, respectively, the divergence-preserving and integral-preserving transformations, defined as 
\begin{alignat*}{3}
	\iota_v:&\bm H(\dive;\Omega_0)\to \bm H(\dive;\hat\Omega_0),\qquad  &&\iota_v \left(\vv\right):=\det \left( D\F\right) D\F^{-1} \left( \vv\circ\F\right),\\
	\iota_p: & L^2(\Omega_0)\to L^2(\hat \Omega_0),\qquad &&\iota_p(q):=\det \left( D\F\right)\left( q\circ\F\right).
\end{alignat*}
 Let us restrict them to the the active part of the domain, \emph{i.e.},
\begin{equation*}
	\begin{aligned}
		V_h^\mathrm{RT} :=& \{\restr{\vv_h}{\Omega}:\vv_h\in \ V_{0,h}^{\mathrm{RT}} \},\;
		V_h^\mathrm{N} := \{\restr{\vv_h}{\Omega}:\vv_h \in  V_{0,h}^{\mathrm{N}} \},\;
		V_h^\mathrm{TH} :=\{\restr{\vv_h}{\Omega}:\vv_h \in  V_{0,h}^{\mathrm{TH}} \},\;\\
		Q_h^\mathrm{RT} =Q_h^\mathrm{N}:=& \{\restr{q_h}{\Omega}:q_h \in  Q^\mathrm{RT}_{0,h} \},\; 	Q_h^\mathrm{TH} := \{\restr{q_h}{\Omega}:q_h \in  Q^\mathrm{TH}_{0,h} \}.
	\end{aligned}
\end{equation*}
We observe that in general, in the physical domain, $V_h^\mathrm{RT}\subset V_h^\mathrm{N}\not\subset  V_h^\mathrm{TH}$.

To alleviate the notation, we may omit the superscript $\square\in\{\mathrm{RT},\mathrm{N},\mathrm{TH}\}$ when what said does not depend from the particular isogeometric element choice.

It will be convenient to define the \emph{active parametric B\'ezier mesh} $\hat{\mathcal M}_h=\{Q\in\hat{\mathcal M}_{0,h}: Q\cap\hat\Omega\ne\emptyset\}$, where $\hat\Omega=\mathbf F^{-1}(\Omega)$, and, similarly, the \emph{active physical B\'ezier mesh} $\mathcal M_h=\{K: K=\mathbf F (Q), Q\in\hat{\mathcal M}_h\}$.
For every $K\in\mathcal M_h$, let $h_K:=\operatorname{diam}(K)$ and $h:=\max_{K\in\mathcal M_h}h_K$. We define $\h:\Omega\to (0,+\infty)$ to be the piecewise constant mesh-size function of $\mathcal M_h$, assigning to the active part of each element $K\in\mathcal M_h$ its whole diameter, namely $\restr{\h}{K\cap\Omega}:=h_K$.
The elements whose interiors are cut by the trimming curve (or surface) are denoted as $\mathcal G_h$, namely, $\mathcal G_h:=\{K\in\mathcal M_h: \Gamma_K\ne \emptyset\}$, where $\Gamma_K:=\Gamma\cap K$. 

The following result holds since $\Gamma$ is assumed to be Lipschitz-regular, hence not too oscillating.
\begin{lemma}\label{lemma:boundary}
There exists $C>0$ such that, for every $K\in\mathcal G_h$, it holds $\abs{\Gamma_K}\le Ch_K^{d-1}$.
\end{lemma}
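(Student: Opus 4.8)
The plan is to exploit the bi-Lipschitz regularity of $\mathbf{F}$ (Assumption~\ref{1}) to transfer the estimate to the parametric domain, where $\Gamma$ pulls back to a Lipschitz hypersurface $\hat\Gamma = \mathbf{F}^{-1}(\Gamma)$. First I would fix $K\in\mathcal G_h$ with $K=\mathbf{F}(Q)$, $Q\in\hat{\mathcal M}_h$, and observe that $\Gamma_K = \mathbf{F}(\hat\Gamma \cap Q)$. Since $\mathbf{F}$ (restricted to $\overline Q$) is Lipschitz, the $(d-1)$-dimensional Hausdorff measure satisfies $\abs{\Gamma_K} = \abs{\mathbf{F}(\hat\Gamma\cap Q)} \le L^{d-1}\,\abs{\hat\Gamma\cap Q}$, where $L = \norm{D\mathbf{F}}_{L^\infty(Q)}$ is uniformly bounded by consequence~(2) of Assumption~\ref{1}; moreover $h_Q \approx h_K$ by consequence~(1). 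Hence it suffices to prove the parametric estimate $\abs{\hat\Gamma \cap Q} \le C h_Q^{d-1}$ with $C$ independent of $Q$ and of the cut.

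For the parametric estimate, the key point is that $\hat\Gamma$, being the boundary portion $\Gamma\cap\partial\Omega_1$ pulled back through a bi-Lipschitz map, is a Lipschitz hypersurface: it can be covered by finitely many open sets $U_1,\dots,U_m$ in each of which, after a rigid rotation, $\hat\Gamma$ is the graph of a Lipschitz function $\phi_j:\R^{d-1}\to\R$ with Lipschitz constant $\le\Lambda$, where $m$ and $\Lambda$ depend only on $\Omega_1$ and $\mathbf{F}$, not on the mesh. For a cube $Q$ of side $\sim h_Q$ contained in one chart $U_j$ (which holds once $h$ is small enough, since the charts form a fixed finite cover), the surface $\hat\Gamma\cap Q$ is contained in the graph of $\phi_j$ over the projected base, a $(d-1)$-cube of side $\sim h_Q$; the area formula then gives $\abs{\hat\Gamma\cap Q} \le \sqrt{1+\Lambda^2}\,(C h_Q)^{d-1}$. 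For the finitely many coarse cubes meeting more than one chart, one covers $Q\cap\hat\Gamma$ by a bounded number of pieces and sums, the number of pieces being bounded independently of $h$ by the shape-regularity Assumption~\ref{shape_regularity} together with the fixed Lebesgue number of the cover. Pulling this back to the physical domain via the first paragraph and absorbing all the constants into a single $C$ completes the proof.

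The main obstacle I expect is making precise the claim that $\abs{\hat\Gamma\cap Q}$ is controlled by $h_Q^{d-1}$ \emph{uniformly} over all elements and all cut configurations — i.e., ruling out pathological situations in which the trimming surface, although globally Lipschitz, folds back and forth many times inside a single small cube and thereby accumulates surface area super-proportional to $h_Q^{d-1}$. This is exactly where global (as opposed to local) Lipschitz regularity of $\Gamma$ is essential: the graph structure over a fixed finite atlas bounds how much of $\hat\Gamma$ can fit inside a ball of radius $h$, since the base of the graph has $(d-1)$-measure at most $\sim h^{d-1}$ and the graph map is Lipschitz. One must check that $h$ is small enough that each (parametric) element sits within a single chart, or else bound the number of charts it can overlap — both handled by the finiteness of the atlas and local quasi-uniformity of the mesh. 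Once this uniformity is secured, the rest is the routine change-of-variables bookkeeping sketched above.
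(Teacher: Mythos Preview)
Your argument is correct and self-contained. The paper itself does not give a proof but simply defers to \cite{lozinski}; what you have written is essentially the standard argument that such a citation points to: pull back to the parametric domain via the bi-Lipschitz map (so that Hausdorff measures transform up to a fixed constant), then use the finite Lipschitz atlas of the boundary to bound the surface measure inside any cube by the $(d-1)$-measure of its projection onto the base of the local graph. Your identification of the only delicate point---uniformity of the bound over all cut configurations, which hinges on the global (not merely local) Lipschitz graph structure preventing the surface from folding repeatedly inside a single small element---is exactly right, and your handling of it via a fixed Lebesgue number of the atlas is the correct mechanism. One small clarification: some elements $K\in\mathcal G_h$ may carry boundary pieces $\Gamma_K$ that include portions of the untrimmed boundary $\partial\Omega_0$, whose pullbacks are flat faces of the parametric cube; for these the estimate is immediate, and your argument covers them as a trivial special case.
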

\begin{proof}
See~\cite{MR3806650}.
\end{proof}
We endow the discrete spaces of the velocities with the scalar product
\begin{align*}
	\left( \w_h,\vv_h\right)_{1,h}:= \int_{\Omega}  D\w_h: D\vv_h  +\int_{\Gamma_D}  \mathsf h^{-1}  \w_h\cdot\vv_h, \qquad \w_h,\vv_h\in  V_h,
\end{align*}
inducing the mesh-dependent norm
\begin{alignat*}{3}
	\norm{\vv_h}_{1,h}^2: =& \norm{ D\vv_h}^2_{L^2(\Omega)}+\norm{\h^{-\frac{1}{2}}\vv_h}^2_{L^2(\Gamma_D)},\qquad&&\ \vv_h\in V_h.
\end{alignat*}
We also equip the discrete spaces of the pressures with the mesh-dependent norm
\begin{alignat*}{3}
	\norm{q_h}_{0,h}^2: =& \norm{q_h}^2_{L^2(\Omega)}+\norm{\h^{\frac{1}{2}}q_h}^2_{L^2(\Gamma_D)},\qquad&&\ q_h\in Q_h.
\end{alignat*}
We consider the following \emph{Nitsche's formulations} as discretizations of problem~\eqref{stokes_trimming:stokes}.

Find $\left(\u_h,p_h\right)\in  V_h\times Q_h$ such that
\begin{equation}
	\begin{aligned}\label{stokes_trimming:stability:prob6}
		a_h(\u_h,\vv_h) + b_1(\vv_h,p_h)=F_h (\vv_h), \qquad &\forall \ \vv_h\in  V_h, \\
		b_m(\u_h,q_h)  = G_m(q_h), \qquad & \forall\ q_h\in Q_h,
	\end{aligned}
\end{equation}
where $m\in\{0,1\}$ and
\begin{alignat*}{3}
	a_h(\w_h,\vv_h):=&\int_\Omega D\w_h:D\vv_h-\int_{\Gamma_D} D\w_h\n\cdot\vv_h - \int_{\Gamma_D} \w_h\cdot D\vv_h\n\\
	& + \gamma \int_{\Gamma_D}\mathsf h^{-1} \w_h\cdot\vv_h,  \qquad && \w_h,\vv_h\in  V_h,\\
	b_m(\vv_h,q_h):=& -\int_\Omega q_h \dive\vv_h+m \int_{\Gamma_D} q_h \vv_h\cdot\n, && \vv_h\in V_h,q_h\in Q_h,\\
	F_h(\vv_h):=&  \int_\Omega \f\cdot\vv_h + \int_{\Gamma_N} \u_N\cdot \vv_h- \int_{\Gamma_D} \u_D\cdot D\vv_h\n + \gamma\int_{\Gamma_D}\mathsf h^{-1}  \u_D\cdot\vv_h, \quad && \vv_h\in V_h,\\
	G_m(q_h):=&  -\int_\Omega g q_h+ m\int_{\Gamma_D} q_h \u_D\cdot\n, \ \qquad && q_h\in Q_h,
\end{alignat*}
$\gamma>0$ being a penalty parameter.
\begin{remark}
	In the literature, the Nitsche formulation of the Stokes problem was introduced in~\cite{fcd86cec3aaf40f58936b8118fca3f44} with $m=1$ and allows to weakly impose the Dirichlet boundary conditions without manipulating the discrete velocity space. The choice $m=0$ allows for an exactly divergence-free numerical solution for the velocity field in the case of $g\equiv 0$ and the Raviart-Thomas isogeometric element, see Remark~\ref{stokes_trimming:rmk:not_commuting}.
\end{remark}	
\begin{remark}\label{stokes_trimming:remark:bc}
	We observe that, in order to simplify the presentation, in formulation~\eqref{stokes_trimming:stability:prob6} we impose Dirichlet conditions weakly everywhere. In the case where there is $\tilde \Gamma\subset \Gamma_D$ such that $\F^{-1}(\tilde \Gamma)$ is a union of full faces of $\hat \Omega_0$, then one could have strongly imposed Dirichlet's conditions on $\tilde\Gamma$ by appropriately modifying the discrete velocity spaces: the traces for $V_h^{\mathrm{TH}}$ and the normal components for $V_h^{\mathrm{RT}}$ and $V_h^{\mathrm{N}}$ (the tangential components are weakly imposed in the spirit of~\cite{MR3048532}).
\end{remark}
\begin{remark}
	The imposition of the Neumann boundary conditions does not pose any particular problem in a mesh that is not aligned with $\Gamma_N$. These kinds of conditions are natural for the Stokes problem, \emph{i.e.}, they can be enforced through a boundary integral as long as suitable quadrature rules in the cut elements are available, see~\cite{MR3982623}.
\end{remark}
Motivated by the previous remark, we henceforth assume that $\Gamma_N\cap \Gamma_{T}=\emptyset$, so that $\Gamma_T\subseteq \Gamma_D$, \emph{i.e.}, we impose Dirichlet boundary conditions on the trimming curve.
\section{Lack of stability of the Nitsche method}\label{stokes_trimming:section:lack_stab}
Throughout this section, we want to show with some numerical experiences that the Nitsche formulation of the Stokes problem~\eqref{stokes_trimming:stability:prob6}, discretized with Raviart-Thomas, N\'ed\'elec, and Taylor-Hood elements, lacks stability when working on trimmed geometries. It is well-known (see~\cite{MR972452,MR650055}) that the following are necessary conditions for the well-posedness of formulation~\eqref{stokes_trimming:stability:prob6} for both $m\in\{0,1\}$.
\begin{enumerate}
	\item There exists $\overline\gamma>0$ such that, for every fixed $\gamma\ge\overline\gamma$, there exists $M_a>0$ such that
	\begin{align}
		\abs{a_h(\w_h,\vv_h)}&\le M_a \norm{\w_h}_{1,h}\norm{\vv_h}_{1,h},\,\qquad\qquad\forall\ \w_h,\vv_h\in   V_h\label{stokes_trimming:lackstab:eq1}.
	\end{align}
	\item There exist $\beta_1>0$, $\beta_0>0$ such that
	\begin{align}
		\inf_{q_h\in   Q_h}\sup_{\vv_h\in   V_h}\frac{b_1(\vv_h,q_h)}{\norm{q_h}_{0,h}\norm{\vv_h}_{1,h}}&\ge\beta_1,\label{stokes_trimming:lackstab:eq4_0}\\
		\inf_{q_h\in   Q_h}\sup_{\vv_h\in   V_h}\frac{b_0(\vv_h,q_h)}{\norm{q_h}_{0,h}\norm{\vv_h}_{1,h}}&\ge\beta_0.\label{stokes_trimming:lackstab:eq4_1}
	\end{align}
\end{enumerate}
\begin{remark}
	We observe that $M_a$ depends (and grows dependently) on $\gamma$, which has to be taken sufficiently large, \emph{i.e.}, $\gamma\ge\overline\gamma$, but at the same time as small as possible, \emph{i.e.}, close to $\overline\gamma$, in order not to end up with a too-large continuity constant.
\end{remark}	
We want to show that the stability constants in the previous estimates, $M_a$, $\beta_1$, $\beta_0$, can be arbitrarily negatively influenced by the relative position between the mesh and the trimming curve; hence they are not uniform with respect to the trimming operation. Note that in the following essential boundary conditions are enforced on the whole boundary, and they are weakly imposed on the parts unfitted with the mesh. Let us proceed in order.
\begin{enumerate}
	\item The breakdown example for the robustness of the continuity constant $M_a$ is the following. Let $\Omega_0=\left(0,1\right)^2$, $\Omega_1=\left(0,1\right)\times \left(0.75+\eps,1\right)$ and $\Omega=\Omega_0\setminus \overline\Omega_1$, as illustrated in Figure~\ref{stokes_trimming:lackstab}\subref{stokes_trimming:lackstab:fig3}. Note that the continuity constant of $a_h(\cdot,\cdot)$ corresponding to $\gamma=1$ is smaller than the one related to $\gamma>1$, \emph{i.e.}, $M_a^{\gamma}>M_a^1$ for every $\gamma>1$. Hence, in order to verify that the continuity constant also degenerates with the cut, it is sufficient to show that $M_a^1$ grows as $\eps$ gets smaller. $M_a^1$ can be estimated as the largest eigenvalue of the subsequent generalized eigenvalue problem. 
	
	Find $\left(\u_h,\lambda_h\right)\in  V_h\setminus\{0\}\times\R$ such that
	\begin{align}\label{stokes_trimming:lackstab:eq5}
		a_h(\u_h,\vv_h) = \lambda_h \left(\u_h,\vv_h \right)_{1,h},\qquad\forall\ \vv_h\in {V}_h.
	\end{align}
	\begin{figure}[!ht]
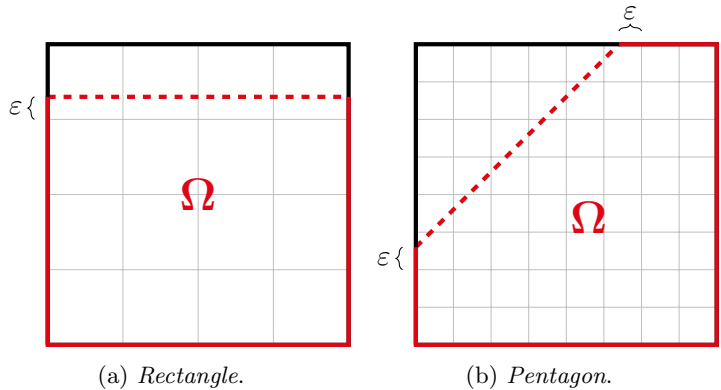

		\centering	
		\subfloat[\emph{Rectangle}.]
		{
			\includegraphics[]{mesh_epsilon.tex}\label{stokes_trimming:lackstab:fig3}
		}
		\subfloat[\emph{Pentagon}.]
		{
			\includegraphics[]{mesh_epsilon_pentagon.tex}\label{stokes_trimming:lackstab:fig1}  
		}
		\caption{The trimmed geometries.}\label{stokes_trimming:lackstab}
	\end{figure}	
	Assume that the mesh is uniform and let us fix the degree $k=2$.
	
	Then, let us compute $\lambda_h^{\max}$ for different values of $\eps$: in each configuration we refine the mesh, see Figure~\ref{stokes_trimming:lackstab:fig3bis}. We can clearly see how the largest eigenvalue grows unboundedly as $\eps$ goes to zero, implying that the continuity constant can be made arbitrarily large by reducing $\eps$. As already observed in the literature in the case of the Poisson problem~\cite{MR4155233}, this is due to the lack of an inverse inequality robust with respect to the trimming operation, namely,
	\begin{equation*}
		\norm{\h^{\frac{1}{2}}D\vv_h\n}_{L^2(\Gamma_K)}\le C \norm{D \vv_h}_{L^2(K\cap\Omega)},
	\end{equation*}
	with $C$ independent of the shape and diameter of $K\cap\Omega$.
	\begin{figure}[!ht]
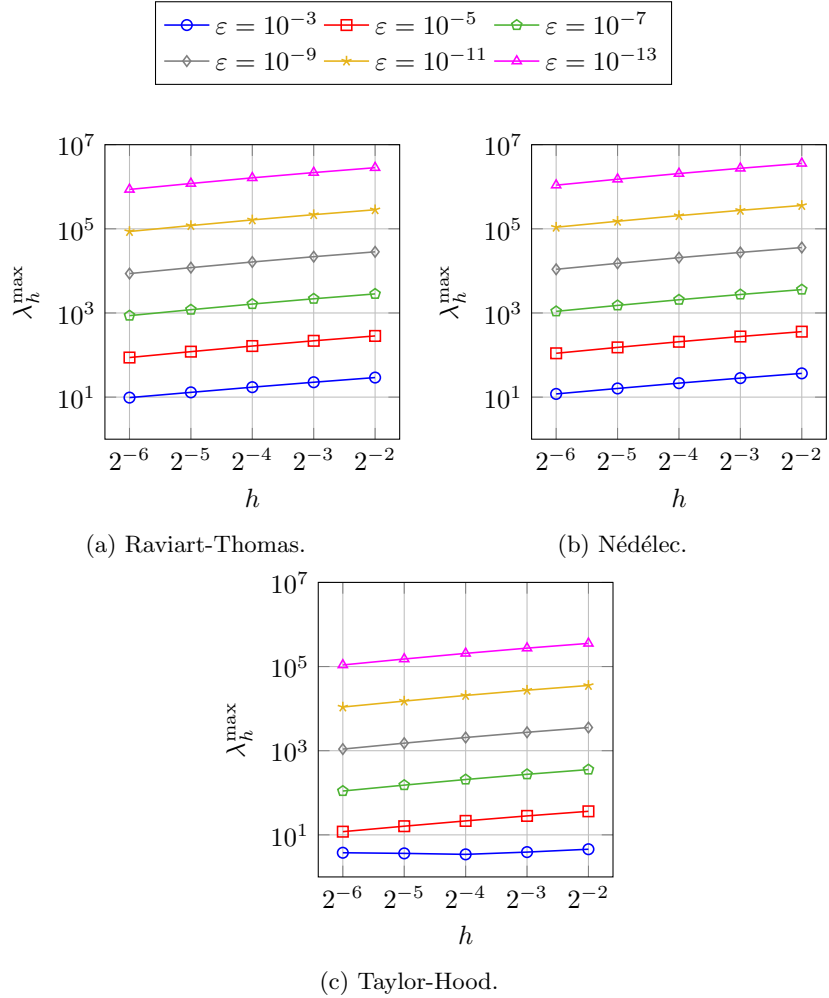

		\centering
		\includegraphics[]{legend_eps.tex}  \\
		\vspace{0.5cm}
		\subfloat[Raviart-Thomas.]
		{	
			\includegraphics[]{stokes_continuity_square_RT.tex}  
		}
		\subfloat[N\'ed\'elec.]
		{	
			\includegraphics[]{stokes_continuity_square_N.tex}  
		} \\
		\subfloat[Taylor-Hood.]
		{	
			\includegraphics[]{stokes_continuity_square_TH.tex}  
		}
		\caption{Maximum generalized eigenvalue of~\eqref{stokes_trimming:lackstab:eq5} for the trimmed \emph{rectangle}.}\label{stokes_trimming:lackstab:fig3bis}
	\end{figure}
	\item Now, we consider a different setting. Let $\Omega_0=\left(0,1\right)^2$, $\Omega_1$ be the triangle with vertices $(0,0.25+\eps)-(0,1)-(0.75-\eps,1)$ and $\Omega=\Omega_0\setminus \overline\Omega_1$, see Figure~\ref{stokes_trimming:lackstab}\subref{stokes_trimming:lackstab:fig1}. We want to study the values of $\beta_m$, $m\in\{0,1\}$, with respect to the trimming parameter $\eps$. The inf-sup constants are numerically evaluated as explained in~\cite{MR1813187}. In Figure~\ref{stokes_trimming:lackstab:fig2} we plot $\beta_m$ for $k=2$ and different values of the trimming parameter $\eps$ and the mesh-size $h$. The numerical experiments show the dependence of $\beta_m$ on $\eps$. This negative result is due to the presence of a \emph{spurious pressure mode} $p_h^\eps$ (technically speaking it is not spurious since, even if $\beta_m(p_h^\eps)\ll 1$, it still holds $\beta_m(p_h^\eps)\neq 0$) whose support is concentrated in trimmed elements with a very small overlap with the physical domain $\Omega$. 
	
	\begin{figure}[!ht]
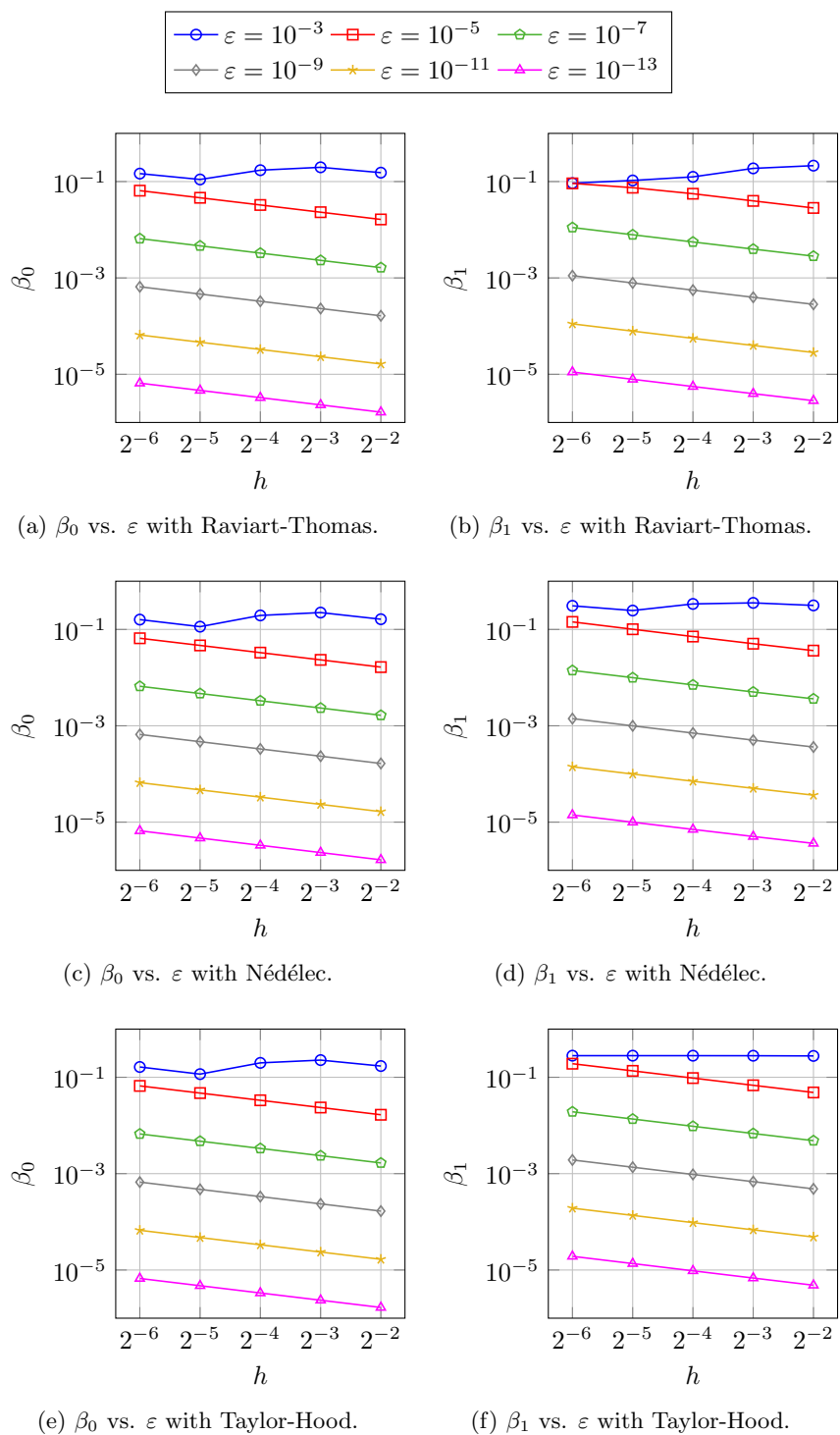

		\centering
		\includegraphics[]{legend_eps.tex}  \\
		\vspace{0.5cm}
		\subfloat[$\beta_0$ vs. $\eps$ with Raviart-Thomas.]
		{	
			\includegraphics[]{inf_sup_b0_pentagon_RT.tex}  
		}
		\subfloat[$\beta_1$ vs. $\eps$ with Raviart-Thomas.]
		{
			\includegraphics[]{inf_sup_b1_pentagon_RT.tex}  	
		} \\
		\vspace{0.5cm}
		\subfloat[$\beta_0$ vs. $\eps$ with N\'ed\'elec.]
		{	
			\includegraphics[]{inf_sup_b0_pentagon_N.tex}  
		}
		\subfloat[$\beta_1$ vs. $\eps$ with N\'ed\'elec.]
		{
			\includegraphics[]{inf_sup_b1_pentagon_N.tex}  	
		} \\
		\vspace{0.5cm}
		\subfloat[$\beta_0$ vs. $\eps$ with Taylor-Hood.]
		{	
			\includegraphics[]{inf_sup_b0_pentagon_TH.tex}  
		}
		\subfloat[$\beta_1$ vs. $\eps$ with Taylor-Hood.]
		{
			\includegraphics[]{inf_sup_b1_pentagon_TH.tex}  	
		} 
		\caption{Inf-sup constants for the trimmed \emph{pentagon}.}\label{stokes_trimming:lackstab:fig2}
	\end{figure}	
\end{enumerate}
\begin{remark}
	Let us observe that in the previous numerical counterexamples, in order to validate the lack of stability of the formulation, one should have constructed a sequence of spaces depending on $\eps$ rather than changing the domain (as it is done in~\cite{MR4155233}, where the lack of stability of the Nitsche formulation for the Poisson problem on trimmed domains is shown). However, both constructions lead to the same results, and we believe that our choice makes the presentation more fluent. We also note that in the first counterexample, the inf-sup condition is not violated, and, similarly, the second configuration is not a counterexample for the continuity.
\end{remark}

\section{The stabilized Nitsche formulation}\label{stokes_trimming:sec:stabilization}
\subsection{Stabilization procedure}
We start by subdividing, for each $h>0$, the elements of the active physical B\'ezier mesh $\mathcal M_{h}$ into two disjoint collections: the one of the good elements $\mathcal M_h^g$, those with sufficient overlap with the physical domain, and the one of the bad elements $\mathcal M_h^b$, a small portion of which intersect $\Omega$. Then, for each bad element $K$, we select a good neighbor $K'$.
\begin{definition}\label{poisson_trimming:def_goodandbad}
	Let $\theta\in (0,1]$ be the \emph{volume-ratio threshold} and $Q\in\hat{\mathcal{M}}_h$. We say that $Q$ is a \emph{good element} if
	\begin{equation*}\label{poisson_trimming:good_bad_def}
		\frac{\abs{\hat\Omega\cap Q}}{\abs{Q}}\ge\theta.
	\end{equation*}
	Otherwise, $Q$ is a \emph{bad element}. Thanks to the regularity Assumption~\ref{shape_regularity} on $\F$, this classification on the parametric elements naturally induces a classification on the physical elements. $\mathcal M_h^g$ stands for the collection of the good physical B\'ezier elements and $\mathcal M_h^b$ for the one of the bad physical elements. Note that $\mathcal M_h \setminus \mathcal G_h \subseteq \mathcal M_h^g$ and $\mathcal M_h^b\subseteq \mathcal G_h$. We denote the set of \emph{neighbors} of $K$ as
	\begin{equation}\label{poisson_trimming:neighbors}
		\mathcal N(K):=\{K'\in\mathcal M_h: \operatorname{dist}\left(K,K'\right)\le C h\}\setminus\{K\},
	\end{equation}
	where $C$ does not depend on the mesh size nor on the trimming configuration.
\end{definition}
The following assumption is not restrictive and is satisfied whenever the mesh is sufficiently refined, and we take $C$ large enough in~\eqref{poisson_trimming:neighbors}.
\begin{assumption}\label{poisson_trimming:assumption_neigh}
	We assume that for any $K\in\mathcal M_h^b$, there exists $K'\in\mathcal N (K) \cap \mathcal M _h^g$. From now on we will refer to such $K'$ as a \emph{good neighbor} of $K$.
\end{assumption}
 We also define $\overline\Omega_{I,h}=\bigcup_{K\in \mathcal M_{h}\setminus \mathcal G_h}\overline K$, the region occupied by untrimmed elements, and $S_h:=  \Omega \setminus \bigcup_{K\in\mathcal M_h^g} \overline K =\operatorname{int} \bigcup_{K\in \mathcal M_h^b} \overline K\cap \overline\Omega$, the region occupied by bad elements.

It is well known that formulation~\eqref{stokes_trimming:stability:prob6} is stable if $\Omega=\Omega_{I,h}$. In the general case $\Omega_{I,h}\subsetneq \Omega$, the goal of the stabilization is, informally speaking, to extend the stability of the discrete problem from the internal elements of the domain to the cut ones.
\begin{remark}
	We observe that choosing $\theta=1$ in Definition~\ref{poisson_trimming:def_goodandbad} corresponds to stabilizing all cut elements, in which case it holds $\mathcal M^b_h =\mathcal G_h$, $\overline \Omega_{I,h} = \bigcup_{K\in\mathcal M_h^g}\overline K$ and $S_h = \Omega\setminus \overline \Omega_{I,h}$.
\end{remark}
Let us start by stabilizing the pressures. We define the operator $R^p_h: Q_h\to L^2(\Omega)$ locally as $\restr{R^p_h(q_h)}{K}:=R^p_K(q_h)$, for every $K\in\mathcal M_h$ and all $q_h\in Q_h$, as follows:
\begin{itemize}
	\item if $K\in\mathcal M_{h}^g$, then
	\begin{equation*}
		R^p_K(q_h):=\restr{q_h}{K},
	\end{equation*} 	
	\item if $K\in\mathcal M_{h}^b$, then
	\begin{equation*}
		R^p_K(q_h):=\restr{ \mathcal E_{K',K}\left(\Pi_{K'}\left(\restr{q_h}{K'}\right)\right)}{K},
	\end{equation*}
	where $\Pi_{K'}:L^2\left(K'\right)\to\mathbb{Q}_k\left(K'\right)$ is the local $L^2$-projection and $\mathcal E_{K',K}:\mathbb Q_{k}(K')\to \mathbb Q_k(K'\cup K)$ is the canonical polynomial extension. $K'$ is a good neighbor of $K$.
\end{itemize}
\begin{proposition}[Stability property of $R^p_h$]\label{stokes_trimming:prop:stab_prop_press}
	Given $\theta\in (0,1]$, there exist $C_1,C_2>0$ such that, for every $K\in\mathcal M_h$ and $h>0$,
	\begin{align*}
		\norm{\mathsf h^{\frac{1}{2}}R^p_h(q_h)}_{L^2(\Gamma_ K)}\le &C_1\norm{q_h}_{L^2(K'\cap\Omega)},\qquad\forall\ q_h\in  Q_h,\\
		\norm{R^p_h(q_h)}_{L^2(K\cap\Omega)}\le &C_2\norm{q_h}_{L^2(K'\cap\Omega)},\qquad\forall\ q_h\in  Q_h,
	\end{align*}
	where $K'$ is a good neighbor if $K\in\mathcal M_{h}^b$, $K'=K$ and if $K\in\mathcal M^g_{h}$.
\end{proposition}	
\begin{proof}
	Let $q_h\in Q_h^\square$, $\square\in\{\mathrm{TH},\mathrm{RT},\mathrm{N}\}$, and $K\in\mathcal M_h$. We first assume $K\in\mathcal M_h^g$ and let $Q=\F^{-1}(K)$, $\hat q_h=q_h\circ\F$ if $\square=\mathrm{TH}$, $q_h=\det (D\F)^{-1}\hat q_h\circ \F^{-1}$ if $\square\in\{\mathrm{RT},\mathrm{N}\}$. H\"older's inequality, Lemma~\ref{appendix:50}, and Lemma~\ref{lemma:boundary} imply
	\begin{equation*}
		\begin{aligned}
			\norm{\h^{\frac{1}{2}}q_h}_{L^2(\Gamma_K)} = &h_K^{\frac{1}{2}} \norm{q_h}_{L^2(\Gamma_K)} \le h_K^{\frac{1}{2}} \abs{\Gamma_K}^{\frac{1}{2}} \norm{q_h}_{L^\infty(\Gamma_K)} \le h_K^{\frac{1}{2}} \abs{\Gamma_K}^{\frac{1}{2}} \norm{q_h}_{L^\infty(K)}\\
			\le & C_\square h_K^{\frac{1}{2}} \abs{\Gamma_K}^{\frac{1}{2}} \norm{\hat q_h}_{L^\infty(Q)}
			\le  C_\square h_K^{\frac{1}{2}} \abs{\Gamma_K}^{\frac{1}{2}} h_K^{-\frac{d}{2}} \norm{\hat q_h}_{L^2(Q\cap\hat\Omega)} \le C C_\square \overline C_\square \norm{q_h}_{L^2(K\cap\Omega)},
		\end{aligned}
	\end{equation*}
	where $C_\mathrm{TH}=1$, $C_\text{RT}=C_\text{N}=\norm{\det D \F^{-1}}_{L^\infty(K)}$, $\overline C_\mathrm{TH}=\norm{\det D \F^{-1}}^{\frac{1}{2}}_{L^\infty(K\cap\Omega)}$, $\overline C_\mathrm{RT}=\overline C_\mathrm{N}=\norm{\det D\F}^{\frac{1}{2}}_{L^\infty(Q\cap\hat\Omega)}$, and $C$ depends on $k$, and on $\theta$.
	Now, let $K\in\mathcal M_h^b$ with good neighbor $K'$. We employ, respectively, H\"{o}lder's inequality, Lemma~\ref{lemma:boundary}, and Lemma~\ref{appendix:51}, and we get
	\begin{align*}
		\norm{\h^{\frac{1}{2}}R_K^p(q_h)}_{L^2(\Gamma_K)} =& h_K^{\frac{1}{2}}\norm{\mathcal E_{K',K}\left(\Pi_{K'}\left(q_h\right) \right)}_{L^2(\Gamma_K)}\le h_K^{\frac{1}{2}}\abs{\Gamma_K}^{\frac{1}{2}}\norm{\mathcal E_{K',K}\left(\Pi_{K'}\left(q_h\right) \right)}_{L^\infty(\Gamma_K)} \\
		\le &C h_K^{\frac{d}{2}}\norm{\mathcal E_{K',K}\left(\Pi_{K'}\left(q_h\right) \right)}_{L^\infty(K)}	\le C  h_K^{\frac{d}{2}}\norm{\Pi_{K'}\left(q_h\right)}_{L^\infty(K')}.
	\end{align*}	 
	We can now use Lemma~\ref{appendix:lemma:schwab}, the boundedness of the $L^2$-projection with respect to $\norm{\cdot}_{L^2}$ and the local quasi-uniformity of the mesh, to obtain
	\begin{align*}
		\norm{\h^{\frac{1}{2}}R_K^p(q_h)}_{L^2(\Gamma_K)} \le & C h_K^{\frac{d}{2}} h_{K'}^{-\frac{d}{2}} \norm{\Pi_{K'}(q_h)}_{L^2(K')} \le C \norm{q_h}_{L^2(K')},
	\end{align*}
	with $C$ depending on $k$. By applying H\"older's inequality, moving to the parametric domain, using Lemma~\ref{appendix:50}, and moving back to the physical domain, we get
	\begin{equation}
		\begin{aligned}\label{stokes_trimming:eq_felipe}
			\norm{\h^{\frac{1}{2}}R_K^p(q_h)}_{L^2(\Gamma_K)} \le & C h_{K'}^{\frac{d}{2}} \norm{q_h}_{L^\infty(K')} \le C_\square h_{K'}^{\frac{d}{2}} \norm{\hat q_h}_{L^\infty(Q')}\\
			\le &C C_\square h_{K'}^{\frac{d}{2}} h_{Q'}^{-\frac{d}{2}}  \norm{\hat q_h}_{L^\infty(Q'\cap\hat\Omega)} \le C C_\square \overline C_\square  \norm{ q_h}_{L^2(K'\cap\Omega)},	
		\end{aligned}
	\end{equation}
	where $C$ depends, in particular, on $k$ and $\theta$, and $C_\square$, $\overline C_\square$ have been defined above.
	
	Let us move to the proof of the other inequality of the statement. If $K\in\mathcal G_h^g$, then there is nothing to prove. Let $K\in\mathcal G_h^b$ and $K'$ its good neighbor.
	\begin{align*}
		\norm{R_h^p(q_h)}_{L^2(K\cap\Omega)} \le & \abs{K\cap\Omega}^{\frac{1}{2}} \norm{R_h^p(q_h)}_{L^\infty(K\cap\Omega)} \le \abs{K\cap\Omega}^{\frac{1}{2}} \norm{R_h^p(q_h)}_{L^\infty(K)}\\
		\le &C \abs{K\cap \Omega}^{\frac{1}{2}} \norm{\Pi_{K'}(q_h)}_{L^\infty(K')},
	\end{align*}	
	where we have used, respectively, Hölder's inequality and Lemma~\ref{appendix:51}. 
	Note that it is trivial to check that, for every $u\in L^2(K')$, $\norm{\Pi_{K'}(u)}_{L^2(K')}\le \norm{u}_{L^2(K')}$. On the other hand, by using Lemma~\ref{appendix:lemma:schwab}, and the $L^2$-stability of the $L^2$-projection, we have $\norm{\Pi_{K'}(q_h)}_{L^\infty(K')}\le C h_{K'}^{-\frac{d}{2}} \norm{\Pi_{K'}(q_h)}_{L^2(K')}\le C h_{K'}^{-\frac{d}{2}}\norm{q_h}_{L^2(K')}$. Hence, 
	\begin{align*}
		\norm{R_h^p(q_h)}_{L^2(K\cap\Omega)} \le  C \abs{K\cap \Omega}^{\frac{1}{2}} h_{K'}^{-\frac{d}{2}}  \norm{q_h}_{L^2(K')}  \le C \abs{K\cap\Omega}^{\frac{1}{2}} \abs{K}^{-\frac{1}{2}}  \norm{q_h}_{L^2(K')}\le C \norm{q_h}_{L^2(K')},
	\end{align*}
	where in the second last passage of both lines we used the shape-regularity and quasi-local uniformity of the mesh, entailing $h_{K'}^{-\frac{d}{2}} \sim \abs{K'}^{-\frac{1}{2}}$, $\abs{K'}^{-\frac{1}{2}}\sim \abs{K}^{-\frac{1}{2}}$. We observe that the constant $C$ depends on $k$, since we relied on Lemma~\ref{appendix:lemma:schwab}. We conclude as in~\eqref{stokes_trimming:eq_felipe}.
\end{proof}
Now, let us move to the velocities and define, for $\square\in\{\mathrm{RT},\mathrm{N},\mathrm{TH}\}$, the operator $R^v_h: V_h^\square\to \bm L^2(\Omega)$ locally as $\restr{R^v_h(\vv_h)}{K}:=R^v_K(\vv_h)$ for every $K\in\mathcal M_h$ and all $\vv_h\in V_h^\square$:
\begin{itemize}
	\item if $K\in\mathcal M_{h}^g$, then
	\begin{equation*}
		R^v_K(\vv_h):=\restr{\vv_h}{K},
	\end{equation*} 	
	\item if $K\in\mathcal M_{h}^b$, then
	\begin{equation*}
		R^v_K(\vv_h):=\restr{\bm{\mathcal E}_{K',K}\left(\bm\Pi_{K'}\left(\restr{\vv_h}{K'}\right)\right)}{K},
	\end{equation*}
	where $\bm \Pi_{K'}:\mathbf L^2\left(K'\right)\to \mathbb V_k(K')$ is the $L^2$-orthogonal projection onto 
	\begin{align*}
		\mathbb V_k(K'):=&
		\begin{cases}
			\mathbb S_k(K'),\qquad\qquad\qquad\qquad\qquad\qquad\qquad\qquad\qquad\,&\text{if}\; \square=\mathrm{RT},\\
			\left(\mathbb Q_{k+1}(K')\right)^d,\qquad&\text{if}\; \square\in \{\mathrm{N},\mathrm{TH}\},\\
		\end{cases}\\
		\mathbb S_k(K'):=&
		\begin{cases}
			\mathbb Q_{k+1,k}(K')\times \mathbb Q_{k,k+1}(K'),\qquad\qquad\qquad\qquad\qquad\,&\text{if}\; d=2,\\
			\mathbb Q_{k+1,k,k}(K')\times \mathbb Q_{k,k+1,k}(K')\times \mathbb Q_{k,k,k+1}(K'),\qquad&\text{if}\; d=3,
		\end{cases}
	\end{align*}
	and $\bm{\mathcal E}_{K',K}: \mathbb V_h(K')\to \mathbb V_h(K\cup K')$ is the canonical polynomial extension. Here, $K'\in\mathcal M_h^g$ denotes a good neighbor of $K$.
\end{itemize}
\begin{proposition}[Stability property of $R^v_h$]\label{stokes_trimming:prop:stab_prop_vel}
	Given $\theta\in (0,1]$, there exists $C>0$ such that, for every $K\in\mathcal M_h$,
	\begin{equation*}
		\norm{\mathsf h^{\frac{1}{2}}DR^v_h(\vv_h)\n}_{L^2(\Gamma_ K)}\le C\norm{D\vv_h}_{L^2(K'\cap\Omega)},\qquad\forall\ \vv_h\in  V_h,
	\end{equation*}
	where $K'\in\mathcal M_h^g$ is a good neighbor of $K$ if $K\in\mathcal M_{h}^b$, $K'=K$ if $K\in\mathcal M_{h}^g$.
\end{proposition}	
\begin{proof}
	We refer the reader to the proof of Theorem~6.8 of~\cite{MR4155233}. The constant $C$ will depend on $\F$ accordingly to the element choice.
\end{proof}
As we saw in Section~\ref{stokes_trimming:section:lack_stab}, due to the unfitted configuration, the Nitsche formulation~\eqref{stokes_trimming:stability:prob6} may present some serious instabilities. Our remedy is twofold. On the one hand, we locally change the evaluation of the normal derivatives of the velocities in the weak formulation; on the other, we modify the space of the discrete pressures.

We introduce the following \emph{stabilized pressure space}
\begin{align*}
	\overline Q_h:=\big\{ \varphi_h\in L^2(\Omega): \exists\ q_h\in Q_h\ \text{such that}\ \restr{\varphi_h}{\Omega\setminus \overline S_h}=\restr{q_h}{\Omega\setminus \overline S_h}\ \text{and}\ \restr{\varphi_h}{S_h} =\restr{R^p_h(q_h)}{S_h} \big\}.
\end{align*}
\begin{remark}\label{stokes_trimming:rmk:dimensions}
	Let us stress that, while $\dim \overline Q_h\le \dim Q_h$, in general, we have that $\overline Q_h$ is not a subspace of $Q_h$ since its elements are discontinuous functions. However, we observe that the discontinuities are located across the facets in the region of bad elements $\overline S_h$ and, for $q_h\in Q_h$ and $R_h^p(q_h)\in \overline Q_h$, it holds
	\begin{align*}
		\restr{q_h}{\Omega\setminus \overline S_h}= 	\restr{R_h^p(q_h)}{\Omega\setminus \overline S_h}. 
	\end{align*}	
\end{remark}
\begin{remark}\label{stokes_trimming:norm_equivalence}
	Proposition~\ref{stokes_trimming:prop:stab_prop_press} entails that $\norm{\cdot}_{0,h}$ and $\norm{\cdot}_{L^2(\Omega\setminus\overline S_h)}$ are equivalent norms on $\overline Q_h$, namely there exist $c_1,c_2>0$, independent on $h$ and on the the way the mesh is cut by trimming curve, but in general depending on the fixed parameter $\theta\in(0,1]$, such that
	\begin{align*}
		c_1 \norm{q_h}_{L^2(\Omega\setminus\overline S_h)} \le \norm{q_h}_{0,h} \le c_2 \norm{q_h}_{L^2(\Omega\setminus\overline S_h)},\qquad\forall\ q_h\in \overline Q_h.
	\end{align*}
\end{remark}
\begin{remark}
	Let $Q_h=\{\restr{B_\k}{\Omega}: \k\in\mathbf K\}$ and define $\tilde Q_h=\{\restr{B_\i}{\Omega}: \i \in \mathbf I\}$, where $\mathbf I:= \{\mathbf i\in \mathbf K: \exists\ K\in\mathcal M_h^g\ \text{such that}\ K\subset \operatorname{supp} B_{\mathbf i}  \}$. This time $\tilde Q_h$ is a subspace of $Q_h$. Moreover, let us observe that $\overline Q_h$ and $\tilde Q_h$ are isomorphic (as normed vector spaces) when equipped with $\norm{\cdot}_{L^2(\Omega\setminus \overline S_h)}$.
\end{remark}
We introduce the following stabilized version of formulation~\eqref{stokes_trimming:stability:prob6}.

Find $\left(\u_h,p_h\right)\in  V_h\times \overline Q_h$ such that
\begin{equation}
	\begin{aligned}\label{stokes_trimming:stability:prob6_stab}
		\overline a_h(\u_h,\vv_h) + b_1(\vv_h,p_h)=\overline F_h(\vv_h),\qquad &\forall \ \vv_h\in  V_h, \\
		b_m(\u_h,q_h)  =G_m(q_h),\qquad& \forall\ q_h\in  \overline Q_h,
	\end{aligned}
\end{equation}
where $m\in\{0,1\}$ and
\begin{alignat*}{3}
	\overline a_h(\w_h,\vv_h):=&\int_{\Omega} D\w_h:D\vv_h - \int_{\Gamma_D}  D R_h^v\left(\w_h\right)\n\cdot\vv_h - \int_{\Gamma_D} \w_h\cdot D R_h^v\left(\vv_h\right)\n \\
	& + \gamma \int_{\Gamma_D}\mathsf h^{-1} \w_h\cdot\vv_h, \quad&& \w_h,\vv_h\in  V_h,\\
	\overline F_h(\vv_h):=&\int_{\Omega}\f\cdot\vv_h +\int_{\Gamma_N}\bm\sigma\cdot\vv_h  - \int_{\Gamma_D}  \u_D\cdot DR_h^v\left(\vv_h\right)\n\ + \gamma \int_{\Gamma_D}\mathsf h^{-1} \u_D\cdot\vv_h,  \quad && \vv_h\in  V_h.
\end{alignat*}
\begin{remark}
	We believe that this strategy is still consistent with the stabilization procedure of~\cite{MR4155233} since the modification does not affect the space of the velocities, but just the one of pressures, the latter being discontinuous objects from a physical point of view.
\end{remark}	

\subsection{Interpolation and approximation properties of the discrete spaces}
From~\cite{MR2923416}, there exist $\bm E:\bm H^{t}(\Omega)\to\bm H^{t}(\R^d)$, $t\ge 1$, and $E:  H^{r}\left(\Omega\right)\to  H^{r}\left(\R^d\right)$, $r\ge 1$, universal (degree-independent) Sobolev-Stein extensions such that $\dive\circ \bm E = E \circ \dive$.  We define, for $\square\in\{\mathrm{RT},\mathrm{N},\mathrm{TH}\}$ and $t\ge 1$,
\begin{align*}
	\Pi_{V_h}^\square: \bm H^{t}(\Omega)\to V_h^\square,\qquad \vv \mapsto \restr{\Pi^{\square}_{V_{0,h}} \left( \restr{\bm E\left(\vv \right)}{\Omega_0} \right)}{\Omega},
\end{align*}
where $\Pi^{\square}_{V_{0,h}}$ is the spline quasi-interpolant onto $V_{0,h}^{\square}$. Similarly, for the pressures, given $r\ge 1$, we introduce
\begin{equation*}
	\begin{aligned}
		\Pi^\square_{Q_h}:& H^{r}\left(\Omega\right)\to Q^\square_h,\qquad q\mapsto \restr{\Pi^\square_{Q_{0,h}} \left( \restr{E\left(q\right)}{\Omega_{0}}    \right)     }{\Omega},
	\end{aligned}
\end{equation*}
and further compose it with the stabilization operator for the pressures, 
\begin{equation*}
	\begin{aligned}
		\overline	\Pi^\square_{Q_h}:& H^{r}\left(\Omega\right)\to\overline Q_h^\square,\qquad q\mapsto R^p_h\left(\Pi^\square_{Q_h} q_h\right),
	\end{aligned}
\end{equation*}
where $\Pi^\square_{Q_{0,h}}$ is the spline quasi-interpolant onto $Q_{0,h}^\square$. Let us recall that, in the Raviart-Thomas case, $\Pi^\mathrm{RT}_{V_{0,h}}$ and $\Pi^\mathrm{RT}_{Q_{0,h}}$ are defined so that the first diagram in~\eqref{stokes_trimming:comm_diag} commutes (see~\cite{MR2792397}). Our construction implies that also the diagram on the right commutes.
\begin{equation}\label{stokes_trimming:comm_diag}
	\begin{CD}
		\bm H(\dive;\Omega_0)  @>\dive>> L^2(\Omega_0) \\
		@VV \Pi_{V_{0,h}}^{\mathrm{RT}}V @VV \Pi_{Q_{0,h}}^\mathrm{RT}V \\
		V_{0,h}^{\mathrm{RT}} @>\dive>>  Q_{0,h}^\mathrm{RT}
	\end{CD}	
	\qquad\quad	
	\begin{CD}
		\bm	H(\dive;\Omega)  @>\dive>> L^2(\Omega) \\
		@VV \Pi_{V_{h}}^\mathrm{RT}V @VV \Pi_{Q_h}^\mathrm{RT}V \\
		V_h^\mathrm{RT} @>\dive>>  Q_h^\mathrm{RT}
	\end{CD}	
\end{equation}	
\begin{remark}\label{stokes_trimming:rmk:not_commuting}
	Note that the commutativity of the right-hand diagram in~\eqref{stokes_trimming:comm_diag} is lost when instead of $Q_h$ we use the stabilized space $\overline Q_h$.
\end{remark}
\begin{proposition}[Approximation property of~$R_h^v$]\label{stokes_trimming:prop:approx_prop_vel}
	There exists $C>0$ such that, for every $\vv\in\bm H^{t}(\Omega)$, $t\ge 2$, and $K\in\mathcal G_h$,
	\begin{align*}
		\norm{\h^{\frac{1}{2}}D\left(\vv-R_h^v( \Pi^\square_{V_h}\vv) \right)\n}_{L^2(\Gamma_D)}\le C h^s \norm{\vv}_{H^{t}(\Omega)},
	\end{align*}
	where $s:=\min\{k,t-1\}$ if $\square = \mathrm{RT}$ and $s:=\min\{k+1,t-1\}$ if $\square\in\{\mathrm{N},\mathrm{TH} \}$.
\end{proposition}
\begin{proof}
	It is sufficient to apply the vectorial version of Proposition~6.9 of~\cite{MR4155233} and to sum over the cut elements in $\mathcal G_h$. The constant $C$ depends on $\F$ accordingly to the element choice.
\end{proof}
\begin{lemma}\label{stokes_trimming:lemma:stab_interp}
	There exists $C>0$ such that
	\begin{align*}
		\norm{ \Pi_{V_h}\vv}_{1,h}\le C\norm{\vv}_{H^1(\Omega)},\qquad\forall\ \vv\in\bm H^1_{0,\Gamma_D}(\Omega).	
	\end{align*}	
\end{lemma}	
\begin{proof}
	Let $\vv \in\bm H^1_{0,\Gamma_D}(\Omega)$. Using the $H^1$-stability for the quasi-interpolant in the boundary-fitted case~\cite{MR2792397}, we have
	\begin{align}\label{stokes_trimming:eq_rognes1}
		\norm{\Pi_{V_h}\vv}^2_{1,h} = &\norm{D \Pi_{V_{0,h}} \left(\bm E(\vv)\right)}^2_{L^2(\Omega)} + \norm{\h^{-\frac{1}{2}} \Pi_{V_{0,h}} \left(\bm E(\vv)\right)}^2_{L^2(\Gamma_D)}\\
		\le& C\norm{\bm E\left( \vv\right)}_{H^1(\Omega_0)} + \sum_{K\in\mathcal G_h}h_K^{-1}\norm{\Pi_{V_{0,h}}\left(\bm E\left(\vv\right)\right)}^2_{L^2(\Gamma_K)}.
	\end{align}
	By using $\restr{\bm E(\vv)}{\Gamma_D}=0$, Lemma~\ref{appendix:lemma:disc_trace_ineq}, and the optimal approximation properties of the quasi-interpolants on boundary-fitted meshes, it holds
	\begin{equation}
		\begin{aligned}\label{stokes_trimming:eq_rognes2}
			\sum_{K\in\mathcal G_h}h_K^{-1}&\norm{\Pi_{V_{0,h}}\left(\bm E\left(\vv\right)\right)}^2_{L^2(\Gamma_K)}=  \sum_{K\in\mathcal G_h}h_K^{-1} \norm{\Pi_{V_{0,h}}\left( \bm E \left(\vv\right)\right) -\bm E(\vv)}^2_{L^2(\Gamma_K)}\\
			\le&  C  \sum_{K\in\mathcal G_h} h_K^{-1}\norm{\Pi_{V_{0,h}}\left( \bm E \left(\vv\right)\right) -\bm E(\vv)}_{L^2(K)}\norm{\Pi_{V_{0,h}}\left( \bm E \left(\vv\right)\right) -\bm E(\vv)}_{H^1(K)}\\
			\le&  C \norm{\bm E\left(\vv\right)}^2_{H^1(\Omega_0)}.	
		\end{aligned}	
	\end{equation}
	We conclude by combining~\eqref{stokes_trimming:eq_rognes1} and~\eqref{stokes_trimming:eq_rognes2}, and using the boundedness of the Sobolev-Stein extension operator.
\end{proof}
\begin{theorem}\label{stokes_trimming:thm:approx}
	There exist $C_v,C_q>0$ such that for every $\left(\vv,q\right)\in  \bm H^{t}(\Omega)\times H^{r}(\Omega)$, $t\ge 1$ and $r\ge 1$, it holds
	\begin{equation*}	
		\begin{aligned}	
			\norm{\vv- \Pi_{V_h}^\square\vv}_{1,h}\le C_v h^s\norm{\vv}_{H^{t}(\Omega)},\qquad \norm{q-\overline\Pi_{Q_h} q}_{0,h}\le C_q h^\ell \norm{q}_{H^{r}(\Omega)},
		\end{aligned}	
	\end{equation*}
	where $s:=\min\{k,t-1\}$ if $\square=\mathrm{RT}$, $s:=\min\{k+1,t-1\}$  if $\square\in\{\mathrm{N},\mathrm{TH}\}$, and $\ell:=\min\{k+1,r\}$.
\end{theorem}	
\begin{proof}
	For the velocities, we proceed by employing the trace inequality of Lemma~\ref{appendix:lemma:disc_trace_ineq} componentwise, the standard approximation properties of $\Pi_{V_{0,h}}$, and the boundedness of the Sobolev-Stein extension operator.
	\begin{align*}
		\norm{\vv- \Pi^\square_{V_h}\vv}^2_{1,h}
		\le& \norm{D\left(\bm E(\vv)- \Pi^{\square}_{V_{0,h}}\bm E(\vv)\right)}^2_{L^2(\Omega_{0})}\\
		&+C\sum_{K\in\mathcal G_h}\norm{\h^{-1}\left(\bm E(\vv)- \Pi^{\square}_{V_{0,h}}\bm E(\vv)\right)}_{L^2(K)}\norm{\bm E(\vv)- \Pi^{\square}_{V_{0,h}}\bm E(\vv)}_{H^1(K)}\\
		\le &Ch^{2s}\norm{\bm E(\vv)}^2_{H^{t}(\Omega_0)}\\
		&+C\sum_{K\in\mathcal G_h}\norm{\h^{-1} \left(\bm E(\vv)- \Pi^{\square}_{V_{0,h}}\bm E(\vv)\right)}_{L^2(K)}
		\norm{\bm E(\vv)- \Pi^{\square}_{V_{0,h}}\bm E(\vv)}_{H^1(K)}\\
		\le& C h^{2s}\norm{\vv}^2_{H^{t}(\Omega)} + \norm{\h^{-1}\left(\bm E(\vv)- \Pi^\square_{V_h}\vv\right)}_{L^2(\Omega_0)}
		\norm{\bm E(\vv)- \Pi^\square_{V_h}\vv}_{H^1(\Omega_0)}\\
		\le& C  h^{2s}\norm{\vv}^2_{H^{t}(\Omega)},
	\end{align*}
	where $s:=\min\{k,t-1\}$ if $\square=\mathrm{RT}$, $s:=\min\{k+1,t-1\}$  if $\square\in\{\mathrm{N},\mathrm{TH}\}$. For the pressure term, we have
	\begin{align}
		\norm{q-\overline \Pi_{Q_h} q}^2_{0,h}= \norm{\left(q- \overline\Pi_{Q_h} q\right)}^2_{L^2(\Omega)} + \sum_{K\in\mathcal G_h}\norm{\h^{\frac{1}{2}}\left(q- \overline{\Pi}_{Q_h} q\right)}^2_{L^2(\Gamma_K)}.\label{stokes_trimming:eq:interp_press}
	\end{align}	
	For the volumetric term we may proceed analogously to the case of the velocities. Let us focus on the boundary part of~\eqref{stokes_trimming:eq:interp_press} and take $K\in\mathcal M_h^g$. We employ Lemma~\ref{appendix:lemma:disc_trace_ineq}:        
	\begin{align*}
		\norm{\mathsf h^{\frac{1}{2}}\left(E(q)- \overline\Pi_{Q_h} q\right)}^2_{L^2(\Gamma_K)}
		\le& C \norm{E(q)- \Pi_{Q_{0,h}} E\left(q\right)}_{L^2(K)}\norm{\mathsf h\left(E(q)- \Pi_{Q_{0,h}} E\left(q\right)\right)}_{H^1(K)} \\
		\le &C h^{2\ell}\norm{E(q)}_{H^{r}(\tilde K)},
	\end{align*}
	where $\ell:=\min\{k+1,r\}$. Now, let us suppose $K\in\mathcal M_h^b$, with $K'\in\mathcal M_h^g$ its good neighbor. Let $\varphi\in\mathbb Q_k(B_K)$, where $B_K$ is the minimal bounding box enclosing $K$ and $K'$, so that $R_K^p(\varphi)=\varphi$. We have
	\begin{align*}
		\norm{\mathsf h^{\frac{1}{2}}\left(q- \overline\Pi_{Q_h} q\right)}_{L^2(\Gamma_K)}=& \norm{\mathsf h^{\frac{1}{2}}\left(q-R^p_K\left(\Pi_{Q_h} q\right)\right)}_{L^2(\Gamma_K)}\\
		\le& \underbrace{\norm{\mathsf h^{\frac{1}{2}}\left(q-\varphi\right)}_{L^2(\Gamma_K)}}_{\RomanNumeralCaps 1}+\underbrace{\norm{\mathsf h^{\frac{1}{2}}R^p_K\left(\varphi-\Pi_{Q_h} q\right)}_{L^2(\Gamma_K)}}_{\RomanNumeralCaps 2}.\label{stokes_trimming:eq:approx_eq5}
	\end{align*}
	By using Lemma~\ref{appendix:lemma:disc_trace_ineq}, we obtain
	\begin{equation*}\label{stokes_trimming:eq:approx_eq4}
		\begin{aligned}
			\RomanNumeralCaps 1\le&C \norm{E(q)-\varphi}^{\frac{1}{2}}_{L^2(K)}	\norm{\mathsf h\left(E(q)-\varphi\right)}^{\frac{1}{2}}_{H^1(K)} \le C\norm{E(q)-\varphi}^{\frac{1}{2}}_{L^2(B_K)}	\norm{\mathsf h\left(E(q)-\varphi\right)}^{\frac{1}{2}}_{H^1(B_K)} .
		\end{aligned}
	\end{equation*}
	On the other hand, from Proposition~\ref{stokes_trimming:prop:stab_prop_press} and triangular inequality we have
	\begin{equation*}\label{stokes_trimming:the_bay}
		\begin{aligned}
			\RomanNumeralCaps2=&\norm{\mathsf h^{\frac{1}{2}}R_K^p\left(\varphi-\Pi_{Q_h} q\right)}_{L^2(\Gamma_K)}\le C \norm{\left(\varphi-\Pi_{Q_h} q\right)}_{L^{2}( K')} \\
			\le &C\left( \norm{\varphi-q}_{L^{2}( K')}+ \norm{q-\Pi_{Q_h} q}_{L^{2}( K')} \right)\\
			\le& C\left( \norm{\varphi-E(q)}_{L^{2}(B_K)}+ \norm{E(q)-\Pi_{0,Q_h}\left(E\left( q\right)\right)}_{L^{2}(K')} \right).
		\end{aligned}
	\end{equation*}
	Let us choose $\varphi$ such that the Deny-Lions Lemma (Theorem 3.4.1 of~\cite{MR1299729}) holds on $B_K$ and use the optimal approximation properties of $\Pi_{Q_{0,h}}$. Thus
	\begin{equation}\label{stokes_trimming:eq:approx_eq2}
		\begin{aligned}	
			\norm{\mathsf h^{\frac{1}{2}}\left(q- \overline\Pi_{Q_h} q\right)}_{L^2(\Gamma_K)} \le \RomanNumeralCaps 1 + \RomanNumeralCaps 2 \le Ch^{\ell}\left( \norm{E(q)}_{H^{r}(B_K)} +   \norm{E(q)}_{H^{r}(\tilde K')}  \right),
		\end{aligned}	
	\end{equation}	
	where $\ell:=\min\{k+1,r\}$ and $C$ depends on the shape-regularity of $B_K$ (through Theorem~3.4.1 of~\cite{MR1299729}), on $\F$, and on the shape-regularity of the parametric B\'ezier mesh (through the approximation properties of $\Pi_{Q_{0,h}}$). Hence, we conclude by taking the sum over the cut elements. The final constant will depend on $k$, $d$, on the constant appearing in~\eqref{poisson_trimming:neighbors}, on the shape-regularity of the parametric mesh, on $\F$, and on the boundedness of the Sobolev-Stein extension.
\end{proof}	
\section{Well-posedness of the stabilized formulation}\label{stokes_trimming:section5}
The following result gives the necessary and sufficient conditions for the existence, uniqueness and stability of the solution of~\eqref{stokes_trimming:stability:prob6_stab}. Let us denote $K_{m}:=\{\vv_h\in  V_h: b_m(\vv_h,q_h)=0\quad\forall\ q_h\in   \overline Q_h\}$, for $m=0,1$. Even if not explicitly stated in order to keep the notation lighter, the following stability constants are required to be independent of the mesh-size $h$ and on the way $\mathcal M_h$ has been cut by $\Gamma_T$.
\begin{proposition}\label{stokes_trimming:prop:ex_uniq_stab}
	Let us  fix $m\in\{0,1\}$, \emph{i.e.}, we choose either the symmetric or the non-symmetric version of~\eqref{stokes_trimming:stability:prob6_stab}.
	\begin{enumerate}[(i)]
		\item There exists $\overline\gamma>0$ such that, for every $\gamma\ge\overline\gamma$, there exists $M_a>0$ such that
		\begin{align}\label{stokes_trimming:eq:continuity_a} 
			\abs{\overline a_h(\w_h,\vv_h)}\le M_a \norm{\w_h}_{1,h}\norm{\vv_h}_{1,h},\qquad\forall\ \w_h,\vv_h\in   V_h.	
		\end{align}
		\item There exist $M_{b_1}>0$, $M_{b_0}>0$ such that
		\begin{alignat}{3}
			\abs{b_1(\vv_h,q_h)}&\le M_{b_1} \norm{\vv_h}_{1,h}\norm{q_h}_{0,h},\qquad&&\forall\ \vv_h\in   V_h,\forall\ q_h\in  \overline Q_h, \label{stokes_trimming:eq:continuity_b1} \\
			\abs{b_0(\vv_h,q_h)}&\le M_{b_0} \norm{\vv_h}_{1,h}\norm{q_h}_{0,h},\qquad&&\forall\ \vv_h\in   V_h,\forall\ q_h\in  \overline Q_h. \label{stokes_trimming:eq:continuity_b0}
		\end{alignat}
		\item There exist $\overline\gamma>0$ and $\alpha_m>0$ such that, for every $\gamma\ge\overline\gamma$, it holds
		\begin{equation}\label{stokes_trimming:eq:coerc1}
			\inf_{\vv_h\in K_m}\sup_{\w_h\in K_1}\frac{\overline a_h(\w_h,\vv_h)}{\norm{\w_h}_{1,h}\norm{\vv_h}_{1,h}}\ge\alpha_m,
		\end{equation}
		and, for all $\w_h\in K_1\setminus \{0\}$,
		\begin{equation}\label{stokes_trimming:eq:coerc2}
			\sup_{\vv_h\in K_m} \overline a_h(\w_h,\vv_h)>0.
		\end{equation}
		\item There exist $\beta_1>0$, $\beta_0>0$ such that
		\begin{align}
			\inf_{q_h\in   \overline Q_h}\sup_{\vv_h\in   V_h}\frac{b_1(\vv_h,q_h)}{\norm{q_h}_{0,h}\norm{\vv_h}_{1,h}}&\ge\beta_1,\label{stokes_trimming:eq:infsup_b1}\\
			\inf_{q_h\in   \overline Q_h}\sup_{\vv_h\in   V_h}\frac{b_0(\vv_h,q_h)}{\norm{q_h}_{0,h}\norm{\vv_h}_{1,h}}&\ge\beta_0.\label{stokes_trimming:eq:infsup_b0}
		\end{align}
		Conditions~$(i)-(iv)$ hold if and only if there exists a unique solution $\left(\u_h,q_h\right)\in  V_h\times  \overline Q_h$ to~\eqref{stokes_trimming:stability:prob6_stab}. Moreover,
		\begin{equation}\label{stokes_trimming:eq:stab_est}	
			\begin{aligned}
				\norm{\u_h}_{1,h}&\le  \frac{1}{\alpha}\norm{\overline F_h}_{-1,h} + \frac{1}{\beta_m}\left(\frac{M_a}{\alpha}+1 \right)\norm{G_m}_{-0,h}, \\
				\norm{p_h}_{0,h}&\le\frac{1}{\beta_1}\left(1+ \frac{ M_a}{\alpha_m} \right)\norm{\overline F_h}_{-1,h} +  \frac{ M_a}{\beta_m\beta_1}\left(\frac{M_a}{\alpha_m}+1 \right)\norm{G_m}_{-0,h},
			\end{aligned}
		\end{equation}
		where $\norm{\cdot}_{-1,h}$ and $\norm{\cdot}_{-0,h}$ denote the dual norms with respect to $\norm{\cdot}_{1,h}$ and $\norm{\cdot}_{0,h}$, respectively.
	\end{enumerate}
\end{proposition}	
\begin{proof}
	We refer the interested reader to~\cite{MR972452,MR650055}.
\end{proof}	
\begin{remark}
	We observe that condition~\eqref{stokes_trimming:eq:coerc2} can be replaced by $\dim K_m = \dim K_1$. If $m=1$, then conditions~\eqref{stokes_trimming:eq:coerc1} and~\eqref{stokes_trimming:eq:coerc2} can be summarized in the coercivity of $\overline a_h(\cdot,\cdot)$ on $K_1$. Moreover, if $g\equiv 0$, then we are no more bound to satisfy~\eqref{stokes_trimming:eq:infsup_b0} when $m=0$.	
\end{remark}	
\begin{lemma}\label{stokes_trimming:lemma:global_coercivity}
	There exist $\overline\gamma>0$ and $\alpha>0$ such that, for every $\gamma\ge\overline\gamma$, it holds
	\begin{align*}
		\alpha \norm{\vv_h}^2_{1,h}\le \overline a_h(\vv_h,\vv_h),\qquad\forall\ \vv_h\in V_h,
	\end{align*}
	and, for every $\gamma\ge\overline\gamma$, there exists $M_a>0$ such that
	\begin{align*}
		\abs{\overline a_h(\w_h,\vv_h)}\le M_a \norm{\w_h}_{1,h}\norm{\vv_h}_{1,h},\qquad\forall\ \w_h,\vv_h\in   V_h.
	\end{align*}		
\end{lemma}	
\begin{proof}
	This proof is based on Proposition~\ref{stokes_trimming:prop:stab_prop_vel} and follows the same lines of Theorem~5.3 of~\cite{MR4155233}.	
\end{proof}	
During the review process of the manuscript to which this chapter refers (see~\cite{puppi_stokes}), we encountered an error in the proof of the conditions~\eqref{stokes_trimming:eq:infsup_b1},~\eqref{stokes_trimming:eq:infsup_b0}. Due to the lack of time, we are compelled to require them in the form of the following assumption. The search for suitable techniques to derive such properties will be the subject of a future study. 
\begin{assumption}\label{stokes_trimming:assumption_bernarda}
	Given $\theta\in (0,1]$, there exist $\beta_0>0$ and $\beta_1>0$ such that	
	\begin{align*}
		\inf_{q_h\in   \overline Q_h}\sup_{\vv_h\in   V_h}\frac{b_1(\vv_h,q_h)}{\norm{q_h}_{0,h}\norm{\vv_h}_{1,h}}\ge\beta_1,\qquad \inf_{q_h\in   \overline Q_h}\sup_{\vv_h\in   V_h}\frac{b_0(\vv_h,q_h)}{\norm{q_h}_{0,h}\norm{\vv_h}_{1,h}}\ge\beta_0.
	\end{align*}
\end{assumption}	
Section~\ref{stokes_trimming:section:numerical_experiments} includes numerical experiments testing and confirming the validity of Assumption~\ref{stokes_trimming:assumption_bernarda}.
\begin{theorem}\label{stokes_trimming:thm:existence_uniqueness}
	Let us require that Assumption~\ref{stokes_trimming:assumption_bernarda} holds. For $m\in\{0,1\}$, given $\theta\in (0,1]$, there exists a unique solution $\left(\u_h,p_h\right)\in  V_h^\square\times \overline Q_h$ of~\eqref{stokes_trimming:stability:prob6_stab} satisfying the stability estimates~\eqref{stokes_trimming:eq:stab_est}.
\end{theorem}
\begin{proof}
	It suffices to verify the hypotheses of Proposition~\ref{stokes_trimming:prop:ex_uniq_stab}. Conditions~\eqref{stokes_trimming:eq:continuity_a},~\eqref{stokes_trimming:eq:coerc1},~\eqref{stokes_trimming:eq:coerc2} are implied by Lemma~\ref{stokes_trimming:lemma:global_coercivity}. The continuity bounds~\eqref{stokes_trimming:eq:continuity_b1},~\eqref{stokes_trimming:eq:continuity_b0} readily follow from the definitions of $\norm{\cdot}_{1,h}$ and $\norm{\cdot}_{0,h}$. Finally, conditions~\eqref{stokes_trimming:eq:continuity_b1},~\eqref{stokes_trimming:eq:continuity_b0} hold because required by Assumption~\ref{stokes_trimming:assumption_bernarda}. 	
\end{proof}	
\section{\emph{A priori} error estimates}\label{stokes_trimming:sec:apriori}
The goal of this section is to demonstrate that the errors, for both the velocity and pressure fields, achieve optimal \emph{a priori} convergence rates in the topologies induced by the norms $\norm{\cdot}_{1,h}$ and $\norm{\cdot}_{0,h}$, respectively.
\begin{lemma}\label{stokes_trimming:lemma:apriori}
	Let us require that Assumption~\ref{stokes_trimming:assumption_bernarda} holds.	Let $\left(\u,p\right)\in \bm H^{\frac{3}{2}+\eps}(\Omega)\times H^1(\Omega)$, $\eps>0$, and $\left(\u_h,p_h\right)\in  V_h \times \overline Q_h$  be the solutions of~\eqref{stokes_trimming:stokes} and~\eqref{stokes_trimming:stability:prob6_stab} with $m\in\{0,1\}$. Then, for every $\left(\u_I,p_I \right)\in  V_h\times  \overline Q_h$ the following estimates hold.
	\begin{align*}
		\norm{\u_h - \u_I}_{1,h} \le& \frac{1}{\alpha}\left(M_a \norm{\u-\u_I}_{1,h} +\norm{\h^{\frac{1}{2}}D\left(\u-R_h^v(\u_I)\right)\n}_{L^2(\Gamma_D)} + M_{b_1}\norm{p-p_I}_{0,h}  \right)\\
		&+ \frac{1}{\beta_m} \left(1+\frac{M_a}{\alpha}\right) M_{b_m}\norm{\u-\u_I}_{1,h}, \\
		\norm{p_h-p_I}_{0,h} \le & \frac{1}{\beta_1}\left(1+\frac{M_a}{\alpha} \right) \Big(M_a \norm{\u-\u_I}_{1,h} +\norm{\h^{\frac{1}{2}}D\left(\u-R_h^v(\u_I)\right)\n}_{L^2(\Gamma_D)}\\
		& + M_{b_1}\norm{p-p_I}_{0,h} \Big) + \frac{M_a}{\beta_m \beta_1}\left(1+\frac{M_a}{\alpha}\right) M_{b_m}\norm{\u-\u_I}_{1,h}.
	\end{align*}
\end{lemma}
\begin{proof}
	Let $m\in\{0,1\}$ and $\left(\u_I,p_I\right)\in V_h\times\overline Q_h$ be arbitrary. By linearity $\left(\u_h-\u_I,p_h-p_I \right)\in  V_h\times \overline Q_h$ satisfies the saddle point problem
	\begin{equation}\label{stokes_trimming:eq:weak_cons}
		\begin{aligned}
			\overline a_h(\u_h - \u_I,\vv_h) + b_1(\vv_h,p_h-p_I) =  F_I(\vv_h), \qquad&\forall\ \vv_h\in V_h,\\
			b_m(\u_h-\u_I,q_h) =  G_{I,m}(q_h),\qquad&\forall\ q_h\in  \overline Q_h,
		\end{aligned}
	\end{equation}
	where
	\begin{alignat*}{3}
		F_I(\vv_h):= & \int_{\Omega}( D \left( \u-\u_I\right):D \vv_h  - \int_{\Gamma_D}  D\left(\u - R_h^v(\u_I) \right)\n\cdot \vv_h+ b_1(\vv_h,p-p_I)\\
		& - \int_{\Omega}  \left(\u-\u_I\right)\cdot D R_h^v(\vv_h)\n  + \gamma\int_{\Gamma_D}\h^{-1} \left(\u-\u_I\right)\cdot\vv_h,\qquad&&\vv_h\in V_h,\\
		G_{I,m}(q_h):=&b_m(\u-\u_I,q_h),&&q_h\in \overline Q_h.
	\end{alignat*}
	For the sake of completeness, let us show the first line of~\eqref{stokes_trimming:eq:weak_cons}. Note that the second line follows immediately. Recall from~\eqref{stokes_trimming:stokes} and~\eqref{stokes_trimming:stability:prob6} that $F(\vv_h) = \int_\Omega  D\u: D\vv_h -\int_{\Gamma_D}  D\u\n\cdot \vv_h + b_1(\vv_h,p)$ and $\restr{\u}{\Gamma_D}=\u_D$.
	Hence
	\begin{align*}
		\overline a_h(\u_h-\u_I,\vv_h)& + b_1(\vv_h,p_h-p_I) = \overline F_h(\vv_h)-\overline a_h(\u_I,\vv_h)-b_1(\vv_h,p_I)\\
		=& F(\vv_h) - \int_{\Omega} \u_D\cdot D R_h^v(\vv_h)\n
		+\gamma\int_{\Gamma_D} \h^{-1}\u_D\cdot\vv_h - \int_{\Omega}  D\u_I:D\vv_h \\
		& + \int_{\Gamma_D}  DR_h^v(\u_I)\n\cdot \vv_h
		+\int_{\Gamma_D}  \u_I\cdot D R_h^v(\vv_h)\n -\gamma\int_{\Gamma_D}\h^{-1}  \u_I\cdot \vv_h- b_1(\vv_h,p_I)\\
		=&\int_\Omega  D \left( \u-\u_I\right):D \vv_h 
		- \int_{\Gamma_D}   D\left(\u - R_h^v(\u_I) \right)\n\cdot\vv_h
		+ b_1(\vv_h,p-p_I)\\
		& - \int_{\Gamma_D}   \left(\u-\u_I\right)\cdot  D R_h^v(\vv_h)\n+ \gamma\int_{\Gamma_D} \h^{-1} \left(\u-\u_I\right)\cdot \vv_h.
	\end{align*}
	Using the stability estimates~\eqref{stokes_trimming:eq:stab_est}, respectively for $m=0,1$, we get
	\begin{align*}
		\norm{\u_h - \u_I}_{1,h} \le &\frac{1}{\alpha}\norm{F_I}_{-1,h}+ \frac{1}{\beta_m} \left(1+\frac{M_a}{\alpha}\right) \norm{G_{I,m}}_{-0,h}, \\
		\norm{p_h-p_I}_{0,h} \le & \frac{1}{\beta_1}\left(1+\frac{M_a}{\alpha} \right) \norm{F_I}_{-1,h} 
		+ \frac{M_a}{\beta_m \beta_1}\left(1+\frac{M_a}{\alpha}\right) \norm{G_{I,m}}_{-0,h}.
	\end{align*}
	We conclude since, by definition of dual norm, we have
	\begin{align*}
		\norm{F_I}_{-1,h} \le &M_a\norm{\u-\u_I}_{1,h} +\norm{\h^{\frac{1}{2}}D\left(\u-R_h^v(\u_I)\right)\n}_{L^2(\Gamma_D)}+M_{b_1}\norm{p-p_I}_{0,h},\\
		\norm{G_{I,m}}_{-0,h} \le &M_{b_m}\norm{\u-\u_I}_{1,h}.
	\end{align*}
\end{proof}
\begin{theorem}\label{stokes_trimming:thm:apriori}
	Let us require that Assumption~\ref{stokes_trimming:assumption_bernarda} holds.	Let $\left(\u,p\right)\in \bm H^{t}\left(\Omega\right)\times H^{r}\left(\Omega\right)$, $t\ge 2$ and $r\ge 1$, be the solution to problem~\eqref{stokes_trimming:stokes}. Then, the discrete solution $\left(\u_h,p_h\right)\in  V_h^\square\times \overline Q_h$ of the stabilized problem~\eqref{stokes_trimming:stability:prob6_stab} satisfies
	\begin{equation*}
		\norm{\u-\u_h}_{1,h}+\norm{p-p_h}_{0,h}\le C_m h^{\min\{s,\ell\}}\left(  \norm{\u}_{H^{t}(\Omega)}+\norm{p}_{H^{r}(\Omega)}\right),
	\end{equation*}
	where $s:=\min\{k,t-1\}$ if $\square=\mathrm{RT}$, $s:=\min\{k+1,t-1\}$  if $\square=\mathrm{N}$, and $\ell:=\min\{k+1,r\}$, and $C_m>0$ depends on the choice $m\in\{0,1\}$ through the constants appearing in Lemma~\ref{stokes_trimming:lemma:apriori}.
\end{theorem}	
\begin{proof}
	Given $\left(\u_I,p_I \right)\in V_h^\square\times \overline Q_h$, we proceed by triangular inequality:
	\begin{align}\label{stokes_trimming:eq:apriori1}
		\norm{\u-\u_h}_{1,h}\le& \norm{\u-\u_I}_{1,h}+\norm{\u_h-\u_I}_{1,h},\\
		\norm{p-p_h}_{0,h}\le & \norm{p-p_I}_{0,h}+ \norm{p_h-p_I}_{0,h}.	
	\end{align}	
	Using Lemma~\ref{stokes_trimming:lemma:apriori}, we obtain
	\begin{align*}
		\norm{\u-\u_h}_{1,h}\le&  \norm{\u-\u_I}_{1,h}+\frac{1}{\alpha}\Big(M_a \norm{\u-\u_I}_{1,h} +\norm{\h^{\frac{1}{2}}D\left(\u-R_h^v(\u_I) \right)\n}_{L^2(\Gamma_D)}\\
		&+ M_{b_1}\norm{p-p_I}_{0,h}  \Big) + \frac{1}{\beta_m} \left(1+\frac{M_a}{\alpha}\right) M_{b_m}\norm{\u-\u_I}_{1,h},\\
		\norm{p-p_h}_{0,h}\le &\norm{p_h-p_I}_{0,h} +\frac{1}{\beta_1}\left(1+\frac{M_a}{\alpha} \right) \Big(M_a \norm{\u-\u_I}_{1,h} +\norm{\h^{\frac{1}{2}}D\left(\u-R_h^v(\u_I) \right)\n}_{L^2(\Gamma_D)}\\
		&+ M_{b_1}\norm{p-p_I}_{0,h} \Big)+ \frac{M_a}{\beta_m \beta_1}\left(1+\frac{M_a}{\alpha}\right) M_{b_m}\norm{\u-\u_I}_{1,h}.
	\end{align*}
	Let us choose $\u_I:=\Pi_{V_h}^\square\u$ and $p_I:=\overline\Pi_{Q_h} p$ so that, by Proposition~\ref{stokes_trimming:prop:approx_prop_vel} and Theorem~\ref{stokes_trimming:thm:approx}, we obtain
	\begin{align*}
		\norm{\u-\u_h}_{1,h}\le& C_v h^{s} \norm{\u}_{H^{t}(\Omega)}
		+\frac{1}{\alpha}\max\{M_a,1, M_{b_1}\}C h^{\min\{s,\ell\}}\left(\norm{\u}_{H^{t}(\Omega)} +\norm{p}_{H^{r}(\Omega)}\right)\\
		& + \frac{1}{\beta_m} \left(1+\frac{M_a}{\alpha}\right) M_{b_m} C_v h^{s} \norm{\u}_{H^{t}(\Omega)} ,\\
		\norm{p-p_h}_{0,h}\le &C_q h^{\ell}\norm{p}_{H^{r}(\Omega)}\\
		& +\frac{1}{\beta_1}\left(1+\frac{M_a}{\alpha} \right) \max\{M_a,1, M_{b_1}\}C h^{\min\{s,\ell\}}\left(\norm{\u}_{H^{t}(\Omega)} +\norm{p}_{H^{r}(\Omega)}\right) \\
		&+ \frac{M_a}{\beta_m \beta_1}\left(1+\frac{M_a}{\alpha}\right) M_{b_m}C_v h^{s} \norm{\u}_{H^{t}(\Omega)}.
	\end{align*}
\end{proof}	

\section{Numerical examples}\label{stokes_trimming:sec:numerical_experiments}
The main goal of the following numerical experiments is to validate the convergence results of the Theorem~\ref{stokes_trimming:thm:apriori} and to validate the inf-sup condition that we have not been able to prove theoretically.

To prevent the conditioning number of the linear system from being excessively corrupted by the presence of basis functions whose support barely intersects the physical domain, a left-right Jacobi preconditioner is employed. This approach helps for improving the conditioning, but, as previously discussed in~\cite{MR4155233,MR3610101}, it does not completely solve its dependence on the trimming configuration: the interested reader is referred to~\cite{MR3915341} for a more sophisticated approach.
\subsection{Pentagon}\label{stokes_trimming:pentagon_experiment}
Let us consider as computational domain the pentagon $\Omega=\Omega_0\setminus\overline\Omega_1$, where $\Omega_0=\hat\Omega_0$ and $\Omega_1$ is the triangle of vertices $\left(0,0.25+\eps\right)-\left(0,1\right)-\left(0.75-\eps,1\right)$ as illustrated in Figure~\ref{stokes_trimming:lackstab}\subref{stokes_trimming:lackstab:fig1}. Here $\eps=10^{-13}$. The following functions are chosen as manufactured solutions for the velocity and pressure fields:
\begin{align*}
	\u=\left(xy^3,x^4-\frac{y^4}{4} \right),\qquad p = p_{\text{fun}}- \frac{1}{\abs{\Omega}}\int_{\Omega}p_{\text{fun}}, \;\;\text{where}\;\; p_{\text{fun}}=x^3\cos(x)+y^2\sin(x).
\end{align*}
Dirichlet boundary conditions are weakly enforced on the boundary sides unfitted with the mesh, while on the rest, they are imposed in the strong sense (we recall from Remark~\ref{stokes_trimming:remark:bc} that, for the Raviart-Thomas and N\'ed\'elec element, we need to impose the tangential components in a weak sense). We compare, for different isogeometric elements, the well-posedness and accuracy of the non-symmetric, \emph{i.e.}, with $m=0$, non-stabilized and stabilized formulations,~\eqref{stokes_trimming:stability:prob6} and~\eqref{stokes_trimming:stability:prob6_stab} respectively, for $k=2$ and $\gamma=20 \left(k+1 \right)^2$ (the dependency of the penalty parameter on the degree is coherent with~\cite{MR3010180}). The threshold parameter $\theta$ is set equal to $1$,\emph{i.e.}, all cut elements are stabilized.

In Table~\ref{stokes_trimming:num_exp:tab1} we see the values of the inf-sup constants $\beta_0$, $\beta_1$, computed as in~\cite{MR1813187}, in the non-stabilized and stabilized cases (subscripts \emph{ns} and \emph{s} respectively) for the different choices of the isogeometric element (superscripts $\mathrm{RT}$, $\mathrm{N}$ and $\mathrm{TH}$). In the stabilized case, we observe that the inf-sup constants lost their dependence on how the mesh is trimmed. In Figure~\ref{stokes_trimming:num_exp:fig3} the accuracy of the non-stabilized and stabilized formulations are compared. We observe a clear improvement in the pressure error between the non-stabilized and the stabilized case.
\begin{table}[!ht]
	\centering
	\begin{tabular}{c|c c c c c c}
		$h$           & $2^{-1}$  & $2^{-2}$ & $2^{-3}$ & $2^{-4}$ & $2^{-5}$ & $2^{-6}$   \\
		\hline
		$\beta_{0,ns}^{\mathrm{RT}}$      & 0.2437    &   1.6450e-07     &   2.3014e-07   &  3.2638e-07  &  4.6166e-07     & 6.5221e-07    \\
		$\beta_{1,ns}^{\mathrm{RT}}$       &  0.2699 &   2.8358e-07    &     3.9759e-07   & 5.5849e-07      &  7.8738e-07      &  
		1.1108e-06 \\
		$\beta_{0,s}^{\mathrm{RT}}$   & 0.4103  &  0.1740  &  0.2032  &  0.1850  &  0.1588   & 0.1635   \\     
		$\beta_{1,s}^{\mathrm{RT}}$    & 0.3923  &  0.2088 &   0.2397 &   0.2440 &   0.2441  &  0.2442 \\
		\hline
		$\beta_{0,ns}^{\mathrm{N}}$       &  0.2714   &  1.6541e-07   &   2.3212e-07
		& 3.2900e-07 &  4.6583e-07&   6.5811e-07  \\
		$\beta_{1,ns}^{\mathrm{N}}$       &  0.3178    &  3.6259e-07   &  5.0472e-07  &   7.0780e-07    & 9.9752e-07     &  1.4077e-06  \\
		$\beta_{0,s}^{\mathrm{N}}$       &0.4142    &      0.2430      &   0.2902     &   0.2803     &   0.2809    &   0.2676   \\
		$\beta_{1,s}^{\mathrm{N}}$          &  0.4118   & 0.2564     & 0.2979  &  0.3089    &  0.3096    &  0.3096  \\	
		\hline
		$\beta_{0,ns}^{\mathrm{TH}}$      &  0.2672   & 1.6728e-07  &  2.3504e-07 & 3.3295e-07 & 4.7052e-07&  4.7052e-07 \\
		$\beta_{1,ns}^{\mathrm{TH}}$     & 0.2768    & 4.8359e-07   & 6.8222e-07  &   9.6265e-07    & 1.3581e-06   &   1.9189e-06
		\\
		$\beta_{0,s}^{\mathrm{TH}}$     &  0.3374  &  0.2836  &  0.2853 &   0.2853  &  0.2853   & 0.2853   \\
		$\beta_{1,s}^{\mathrm{TH}}$         & 0.2994  &  0.2755  &  0.2789  &  0.2802  &  0.2807  &  0.2809 
	\end{tabular}
	\caption{Inf-sup constant for the \emph{pentagon}: stabilized vs non-stabilized formulations with $k=2$.}\label{stokes_trimming:num_exp:tab1}
\end{table}
\begin{figure}[!ht]
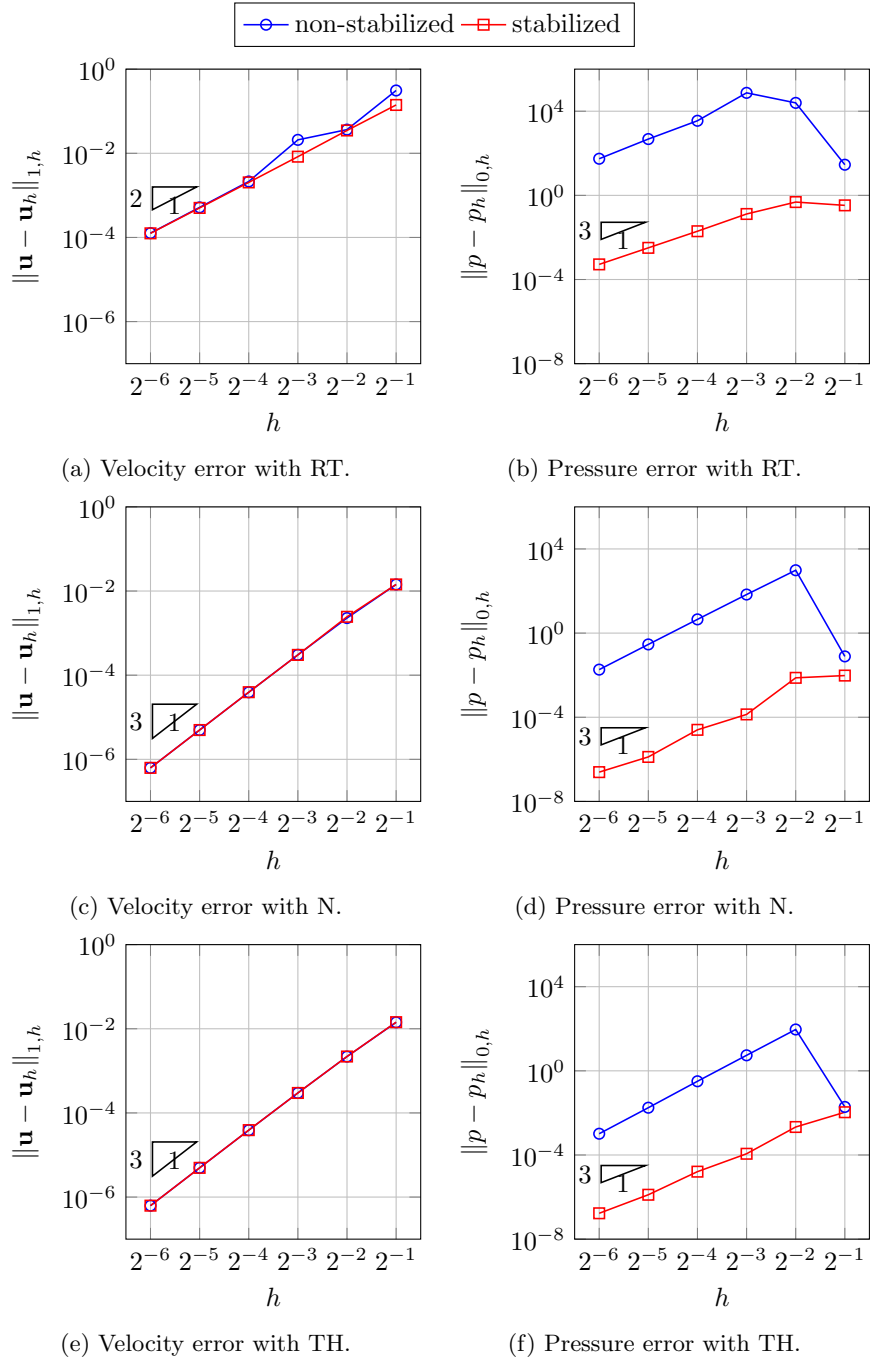

	\centering
	\includestandalone[]{legend_stab_vs_nostab}  \\
	\subfloat[][Velocity error with RT.]
	{
		\includestandalone[]{pentagon_eps_convergence_vel_RT}
	}
	\subfloat[][Pressure error with RT.]
	{
		\includestandalone[]{pentagon_eps_convergence_press_RT}
	} \\
	\subfloat[][Velocity error with N.]
	{
		\includestandalone[]{pentagon_eps_convergence_vel_N}
	}
	\subfloat[][Pressure error with N.]
	{
		\includestandalone[]{pentagon_eps_convergence_press_N}
	} \\
	\subfloat[][Velocity error with TH.]
	{
		\includestandalone[]{pentagon_eps_convergence_vel_TH}
	}
	\subfloat[][Pressure error with TH.]
	{
		\includestandalone[]{pentagon_eps_convergence_press_TH}
	}
	\caption{Convergence errors for the \emph{pentagon} with $k=2$.}\label{stokes_trimming:num_exp:fig3}
\end{figure}

\subsection{Mapped pentagon}
Let us perform an experiment similar to the previous one, this time with a non-linear isogeometric mapping $\F$. We consider $\Omega=\Omega_0\setminus\overline \Omega_1$, where $\Omega_0=\F\left( (0,1)^2\right)$ is the quarter of annulus parametrized by a biquadratic NURBS $\F$, and $\Omega_1=\F(T)$, with $T$ the triangle with vertices $(0,0.25+\eps),(0,1),(0.75-\eps,1)$, see Figure~\ref{stokes_trimming:madre3}\subref{stokes_trimming:fig_mapped_pentagon}. We compare the inf-sup stability of the non-stabilized and the stabilized formulations~\eqref{stokes_trimming:stability:prob6} and~\eqref{stokes_trimming:stability:prob6_stab} respectively, for different degrees and isogeometric elements, $\theta=1$ (we stabilize at all cut elements), and $\eps=10^{-13}$. Dirichlet boundary conditions are imposed on the whole boundary, weakly on the unfitted parts. From Figure~\ref{stokes_trimming:num_exp:fig3bent} we observe that the inf-sup constants of the stabilized formulation behave much better than the ones of the non-stabilized formulation. The order of magnitude of the inf-sup constants in the non-stabilized case are of the same order of the ones in Table~\ref{stokes_trimming:num_exp:tab1}.
\begin{figure}[!ht]
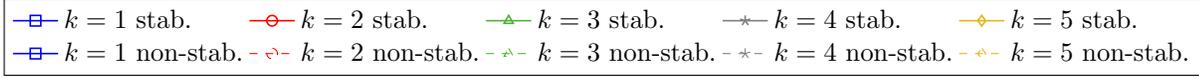
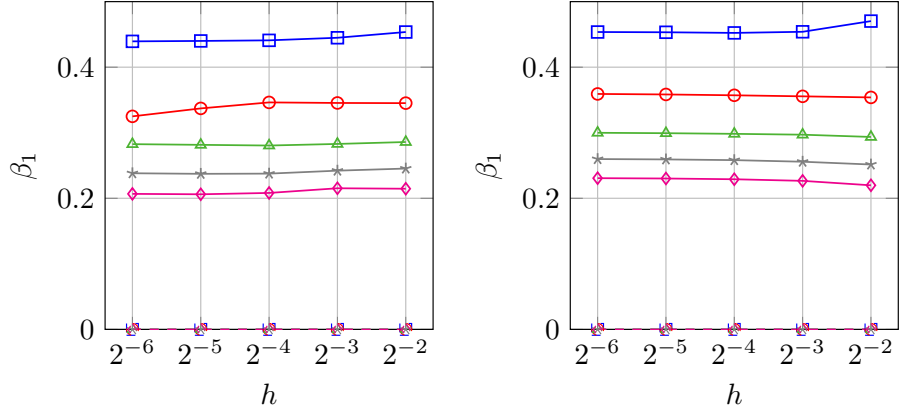
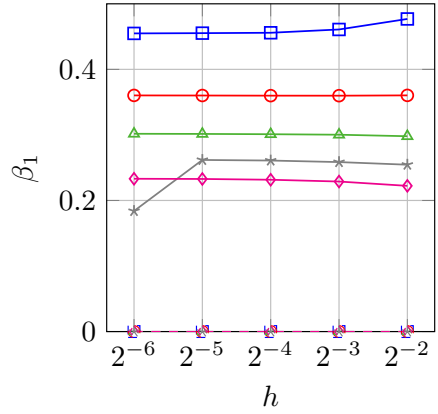

	\centering
	\includestandalone[]{legend_degrees_stab_vs_nostab}  \\
		\vspace{0.5cm}
	\subfloat[][Raviart-Thomas.]
{
	\includestandalone[width=0.35\textwidth,keepaspectratio=true]{is_b1_pentagon_bent_RT}
}
\subfloat[][N\'ed\'elec.]
{
	\includestandalone[width=0.35\textwidth,keepaspectratio=true]{is_b1_pentagon_bent_N}
}\\
	\vspace{0.5cm}
	\subfloat[][Taylor-Hood.]
{
	\includestandalone[width=0.35\textwidth,keepaspectratio=true]{is_b1_pentagon_bent_TH}
}
	\caption{Inf-sup constant for the \emph{mapped pentagon} .}\label{stokes_trimming:num_exp:fig3bent}
\end{figure}
\begin{figure}[!ht]
	\centering
	\subfloat[][\emph{Mapped pentagon}.]
	{
		\includestandalone[width=0.5\textwidth,keepaspectratio=true]{pentagon_bent}\label{stokes_trimming:fig_mapped_pentagon}
	}
	\subfloat[][\emph{Rotating square}\\ for $\alpha\in \{0, \frac{\pi}{10}, \frac{\pi}{5}, \frac{3\pi}{10},\frac{2\pi}{5},\frac{\pi}{2}\}$.]
	{ \raisebox{0ex}{
			\includegraphics[width=0.45\textwidth,keepaspectratio=true]{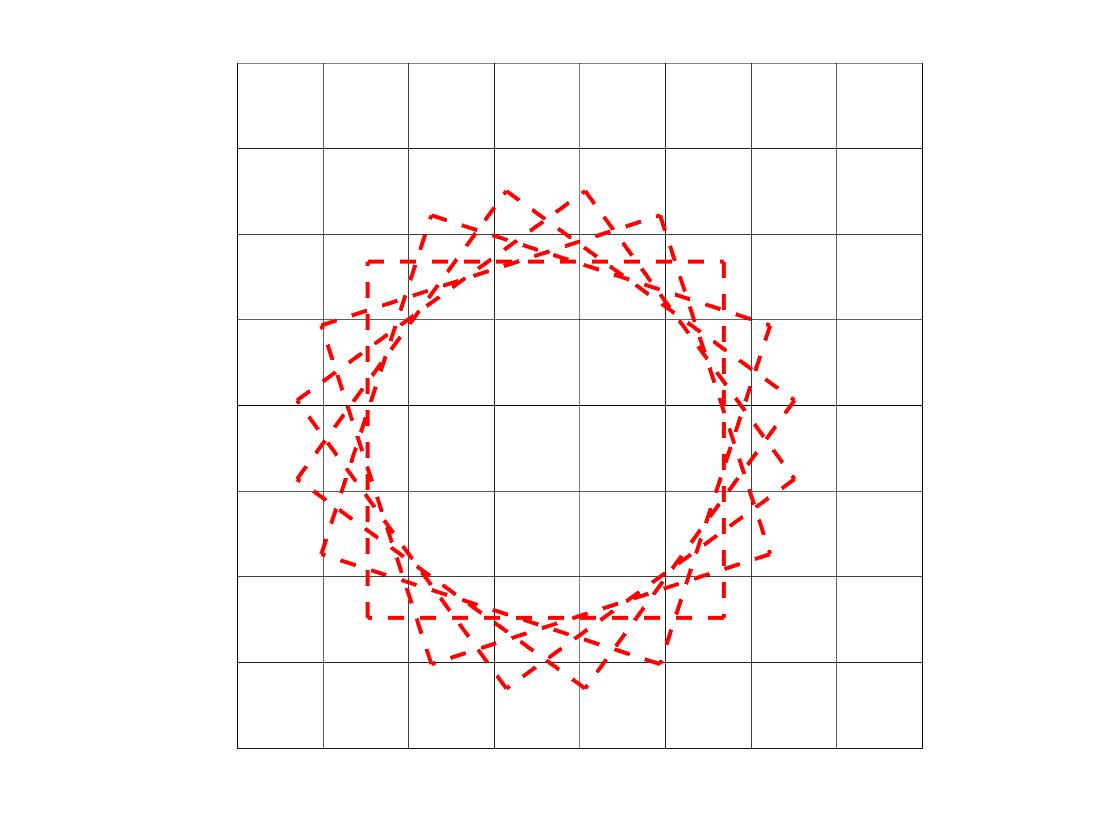}\label{stokes_trimming:rot_square}
		}
	}
	\caption{Trimmed domains.}\label{stokes_trimming:madre3}
\end{figure}

\subsection{Rotating square}
 We embed $\Omega = \left(0.19,0.71\right) \times \left(0.19,0.71\right)$ into $\Omega_0=(0,1)^2$, the latter subdivided with a Cartesian grid of $8$ elements per direction, and rotate $\Omega$ around its barycenter for different angles $\alpha$, as illustrated in Figure~\ref{stokes_trimming:madre3}\subref{stokes_trimming:rot_square}. We choose a threshold parameter $\theta=0.75$, and, for each angle $\alpha\in\{ i\frac{\pi}{200}: i=0,\dots, 100\}$, we compute the inf-sup constants $\beta_0$ and $\beta_1$ in the stabilized and non-stabilized cases. For every configuration we compute $\displaystyle \eta :=\min_{K\in\mathcal M_h}\abs{K\cap \Omega}$ and in Figure~\ref{stokes_trimming:num_exp:figdio6} we plot the inf-sup constants with respect to $\eta$ for the Raviart-Thomas, the N\'ed\'elec, and the Taylor-Hood elements of degree $k=2$. In most configurations, we can observe that the constants corresponding to the stabilized formulation perform better than those of the non-stabilized formulation, specially for small values of $\eta$. 

Furthermore, we notice a greater efficiency, \emph{i.e.}, a greater difference between stabilized and non-stabilized cases, when using the Taylor-Hood element. The configurations corresponding to a smaller $\eta$ do not necessarily give rise to a worse inf-sup constant. Although stabilization does not always seem to ``beat'' the non-stabilized method, we observe that the configurations in which the non-stabilized inf-sup constant is larger than the stabilized one are, in general, the ones with bigger values of $\eta$, which are not the most critical ones.
\begin{figure}[!ht]
	\centering
	\includegraphics[]{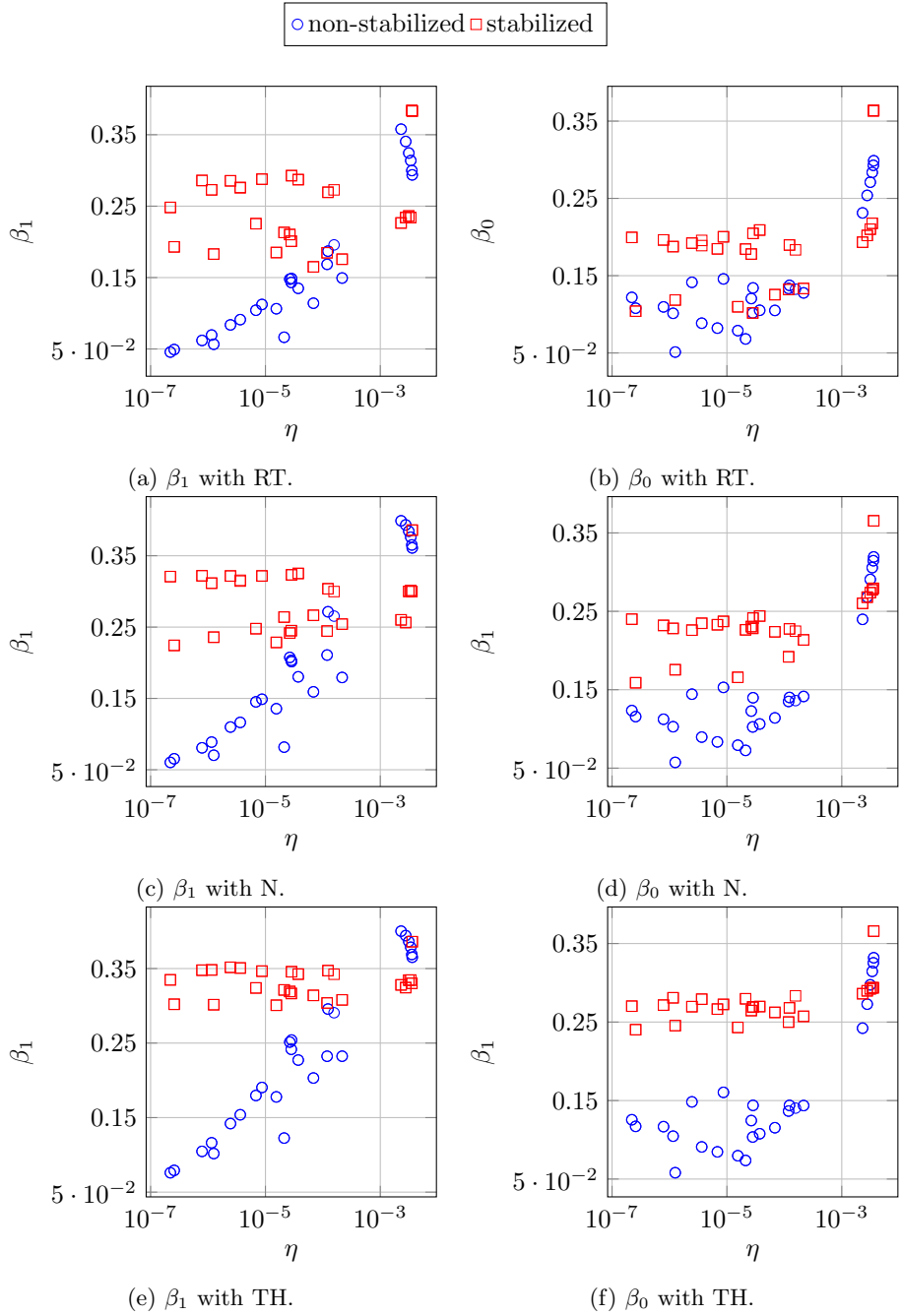}  \\
	\vspace{0.5cm}
	\subfloat[][$\beta_1$ with RT.]
	{
		\includestandalone[]{infsup_beta1_vs_eta_rotating_square_74_RT}
	}
	\subfloat[][$\beta_0$ with RT.]
	{
		\includestandalone[]{infsup_beta0_vs_eta_rotating_square_74_RT}
	}\\
	\subfloat[][$\beta_1$ with N.]
	{
		\includestandalone[]{infsup_beta1_vs_eta_rotating_square_74_N}
	}
	\subfloat[][$\beta_0$ with N.]
	{
		\includestandalone[]{infsup_beta0_vs_eta_rotating_square_74_N}
	}\\
	\subfloat[][$\beta_1$ with TH.]
	{
		\includestandalone[]{infsup_beta1_vs_eta_rotating_square_74_TH}
	}
	\subfloat[][$\beta_0$ with TH.]
	{
		\includestandalone[]{infsup_beta0_vs_eta_rotating_square_74_TH}
	}
	\caption{Inf-sup constants vs. the measure of the ``smallest cut element''.}\label{stokes_trimming:num_exp:figdio6}
\end{figure}		

\subsection{Square with circular trimming}
Let us set up another numerical experience where the physical domain is $\Omega=\Omega_0\setminus\overline\Omega_1$ with $\Omega_0 = \left(0,2\right)\times \left(0,2\right)$ and $\Omega_1 = B(0,r)$, $r = 0.52$, as depicted in Figure~\ref{stokes_trimming:num_exp}\subref{stokes_trimming:num_exp:fig4}. We take as reference solution fields
\begin{align*}
	\u = \left(2y^3\sin(x),x^3\sin(x)- \frac{y^4\cos(x)}{2} -3x^2\cos(x)\right),\qquad p =\frac{x^3y^2}{2} + \frac{y^3}{2},
\end{align*}
where $\u$ is a solenoidal vector field.
We impose Neumann boundary conditions on the straight trimmed sides $\{(0,y):0\le y\le 2\}$, $\{(x,0):0\le x\le 2\}$ and on the rest of the boundary we impose Dirichlet boundary conditions, enforced in a weak sense on $\{\left(r\cos \theta,r \sin\theta \right): 0\le\theta\le\frac{\pi}{2}\}$. 
\begin{figure}[!ht]
	\centering
	\subfloat[B\'ezier mesh.]
	{\raisebox{2ex}
		{\includegraphics[]{plate_with_hole.tex}}  \label{stokes_trimming:num_exp:fig4}
	}
	\subfloat[$\abs{\dive \u_h}$ for $h=2^{-4}$.]
	{
		\includegraphics[width=0.32\textwidth,keepaspectratio=true]{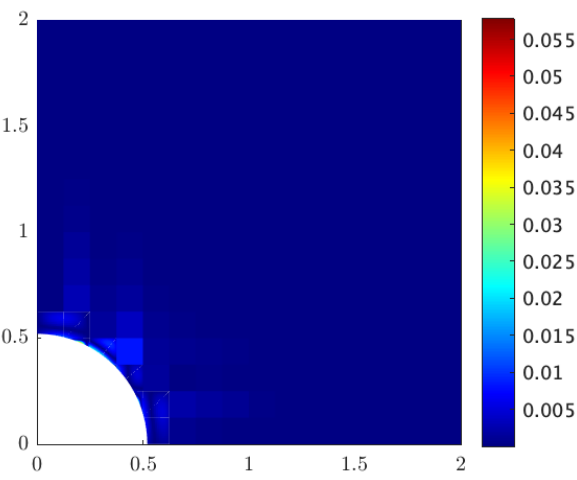} \label{stokes_trimming:num_exp:fig6}
	}
	\caption{\emph{Square with circular trimming}.}\label{stokes_trimming:num_exp}
\end{figure}
We solve using the non-symmetric stabilized formulation~\eqref{stokes_trimming:stability:prob6_stab}, discretized with the Raviart-Thomas element, with different degrees $k=1,2,3,4,5$, penalty parameter $\gamma = 10\left(k+2\right)^2$, and threshold parameter $\theta=1$. The convergence results, validating the error estimates of Theorem~\ref{stokes_trimming:thm:apriori}, are shown in Figure~\ref{stokes_trimming:num_exp:fig5}, while the divergence of the discrete velocity field $\u_h$, for $k=3$ and $h=2^{-4}$, has been plotted in Figure~\ref{stokes_trimming:num_exp}\subref{stokes_trimming:num_exp:fig6}.  As already observed in Remark~\ref{stokes_trimming:rmk:not_commuting}, our numerical scheme does not preserve exactly the incompressibility constraint since $\dive  V^{\mathrm{RT}}_h \not\subset \overline Q_h$. From Figure~\ref{stokes_trimming:num_exp}\subref{stokes_trimming:num_exp:fig6}, we can observe that the divergence of the numerical solution for the velocity is polluted in the vicinity of the trimmed boundary.
\begin{figure}[ht!]
	\centering
	\includegraphics[]{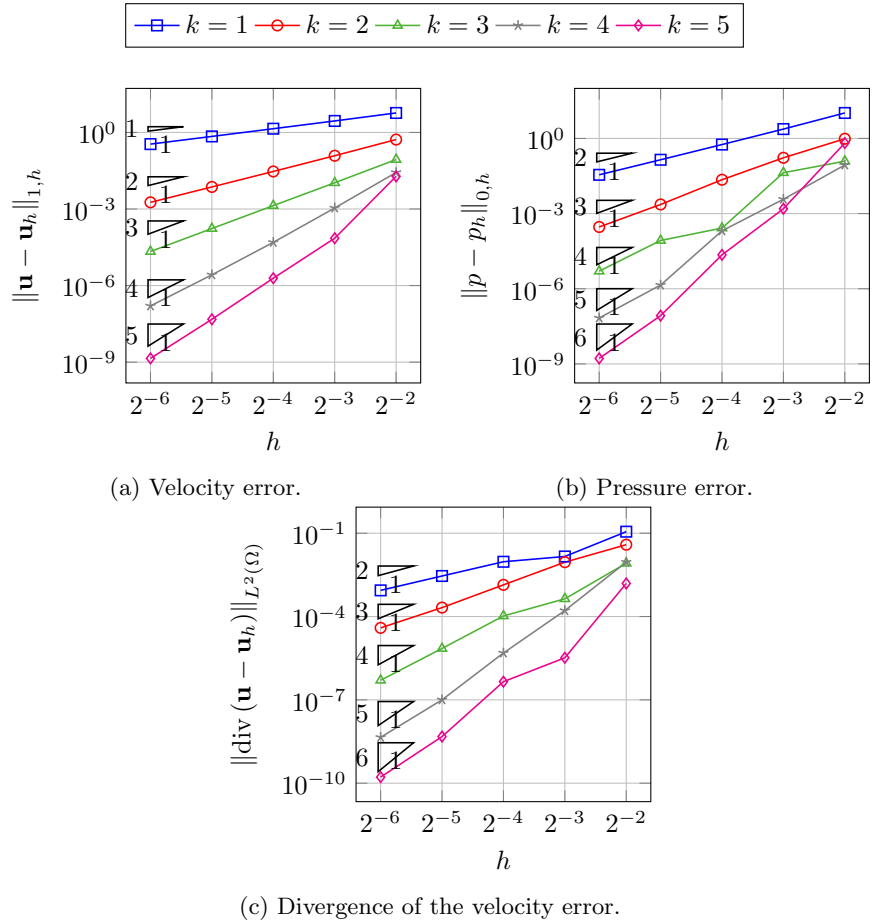}  \\
	\vspace{0.5cm}
	\subfloat[][Velocity error.]
	{
		\includestandalone[]{error_vel_plate_hole_stab}
	}
	\subfloat[][Pressure error.]
	{
		\includestandalone[]{error_press_plate_hole_stab}
	}\\
	\subfloat[][Divergence of the velocity error.]
	{
		\includestandalone[]{error_div_plate_hole_stab}  
	}
	\caption{Convergence errors for the \emph{Square with circular trimming} with the Raviart-Thomas element.}\label{stokes_trimming:num_exp:fig5}
\end{figure}	

\subsection{Stokes flow around a cylinder}
We consider a classic benchmark example in computational fluid dynamics, \emph{i.e.}, the so-called two-dimensional  \emph{flow around a cylinder}, proposed by~\cite{Bayraktar_benchmarkcomputations} and already seen in the context of immersogeometric methods in~\cite{tuong_thesis}. The incompressible flow of a fluid around a cylinder placed in a channel is studied. The physical domain is $\Omega= \Omega_0\setminus\overline\Omega_1$, where $\Omega_0 = \left(0,L\right)\times \left( 0,H\right)$ and $\Omega_1 = B(x_0, R)$ with $L=2.2$, $H={0.41}$, $x_0=\left(0.2,0.2\right)$ and $R=0.05$. Let us observe that $\Omega_1$ is not symmetric with respect to $\Omega_0$. As Dirichlet boundary condition on the inflow boundary $\{(0,y):0\le y\le H\}$ a parabolic horizontal profile is prescribed:
\begin{align*}
	\u(0,y)=
	\begin{pmatrix}
		4 U_m y\left(H-y\right)/H^2 \\
		0	
	\end{pmatrix},
\end{align*}	
where $U_m=0.3$ is the maximum magnitude of the velocity field. Stress free boundary conditions, \emph{i.e.}, $\u_N=\bm{0}$, are imposed on the outflow boundary $\{(L,y):0\le y\le H\}$, while no slip boundary conditions are imposed on the rest of the boundary. No external forces act on the fluid flow, \emph{i.e.}, $\mathbf f =\bm{0}$.

Let us set $k=3$, $\gamma=10\left(k+1\right)^2$ and consider the mesh configuration depicted in Figure~\ref{stokes_trimming:num_exp:fig1}, with $2^6$ elements in the $x$-direction and $2^4$ elements in the $y$-direction.
\begin{figure}
	\centering
	\includegraphics[]{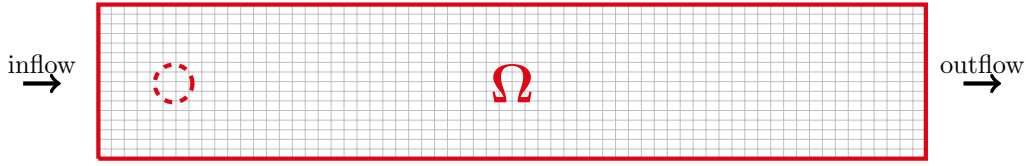}  
	\caption{B\'ezier mesh for the \emph{flow around a cylinder}.}\label{stokes_trimming:num_exp:fig1}
\end{figure}	
In Figure~\ref{stokes_trimming:num_exp:fig8} we show the magnitude of the velocity field and the pressure in the case of the stabilized formulation~\eqref{stokes_trimming:stability:prob6} for $m=0$ with the N\'ed\'elec isogeometric element. Note that both the velocity and pressure fields do not show any spurious oscillations around the trimmed part of the boundary and seem to comply with their physical meaning.

\begin{figure}[!ht]
	\centering
	\subfloat[][Magnitude of the velocity.]
	{
		\includegraphics[width=1\textwidth,keepaspectratio=true]{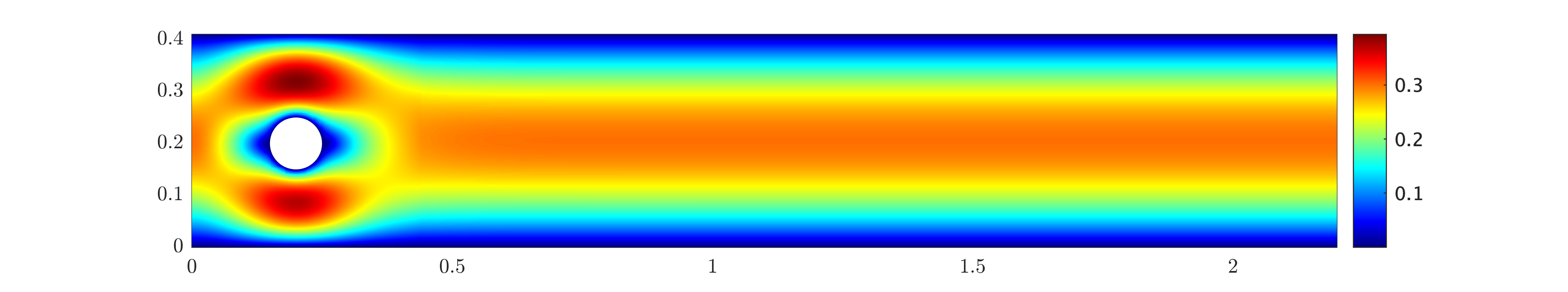}
	}\\
	\subfloat[][Pressure.]
	{
		\includegraphics[width=1\textwidth,keepaspectratio=true]{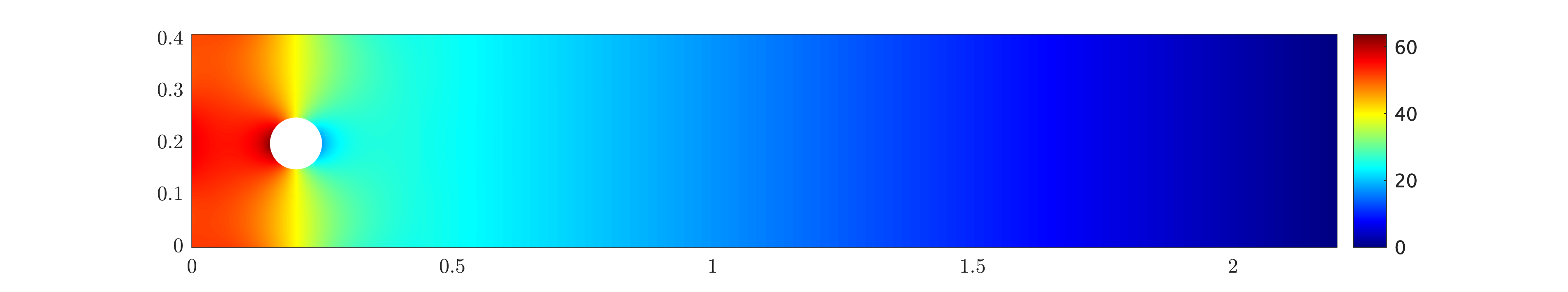}
	}
	\caption{Numerical solutions of the non-symmetric stabilized formulation~\eqref{stokes_trimming:stability:prob6_stab} for the \emph{flow around a cylinder} with the N\'ed\'elec element.}\label{stokes_trimming:num_exp:fig8}
\end{figure}

\subsection{Lid-driven cavity}
The lid-driven cavity is another important benchmark for the Stokes problem where the incompressible flow in a confined volume is driven by the tangential in-plane motion of two opposite bounding walls~\cite{Grcan2003StreamlineTI,Sturges1986StokesFI}. Here, the cavity is represented by the trimmed domain $\Omega=\left(-1,1\right)\times \left(-3,3\right)$ immersed in $\Omega_0$. $\Omega_0$ is the rectangle with vertices $(-3,-3.5)$, $(3,-3.5)$, $(3,3.5)$, $(-3,3.5)$ rotated of $\frac{\pi}{6}$ counterclockwise around the origin. No-slip Dirichlet boundary conditions are imposed on the left and right sides of the cavity, while the top and bottom ones are walls sliding, respectively, to the right and left with unitary velocity magnitude, namely we enforce the non-homogenous Dirichlet boundary conditions 
\begin{align*}
	\u\left(x,3\right) = 
	\begin{pmatrix}
		1 \\
		0
	\end{pmatrix}
	\quad\text{on}\quad \{\left(x,3 \right): x\in\left[-1,1\right]\},\qquad 
	\u\left(x,-3\right) = 
	\begin{pmatrix}
		-1 \\
		0
	\end{pmatrix}
	\quad\text{on}\quad \{\left(x,-3 \right): x\in\left[-1,1\right]\}. 
\end{align*}
Since the Dirichlet boundary conditions have a jump at the corners, the trace of the solution for the velocity does not belong to $\bm H^{\frac{1}{2}}\left( \Gamma\right)$, hence $\u\not\in \bm H^1(\Omega)$. The applied body force is $\f = \bm{0}$.
We solve the problem by using the non-symmetric stabilized formulation~\eqref{stokes_trimming:stability:prob6_stab}, discretized using the Taylor-Hood element, with degree $k=2$, penalty parameter $\gamma=30\left(k+2\right)^2$ and mesh-sizes $h_x=h_y=2^{-5}$ along the first and second parametric directions respectively. The mesh employed for the numerical simulation is depicted in Figure~\ref{stokes_trimming:num_exp:fig12}\subref{stokes_trimming:num_exp:fig2}. In Figures~\ref{stokes_trimming:num_exp:fig12}\subref{stokes_trimming:num_exp:fig12_vel},~\ref{stokes_trimming:num_exp:fig12}\subref{stokes_trimming:num_exp:fig12_press} the numerical solutions for the velocity and the pressures are plotted: our results are qualitatively in accordance with the ones of~\cite{MR2808112,Grcan2003StreamlineTI}.
\begin{figure}[!ht]
	\centering
	\subfloat[][\mbox{B\'ezier mesh for the \emph{lid-driven cavity}}.\label{stokes_trimming:num_exp:fig2}]
	{\raisebox{5ex}{
			\includegraphics[]{lid_driven_cavity_obliquous.tex}  }
	}
	\subfloat[][\mbox{Velocity}.\label{stokes_trimming:num_exp:fig12_vel}]
	{
		\includegraphics[scale=0.5,,keepaspectratio=true]{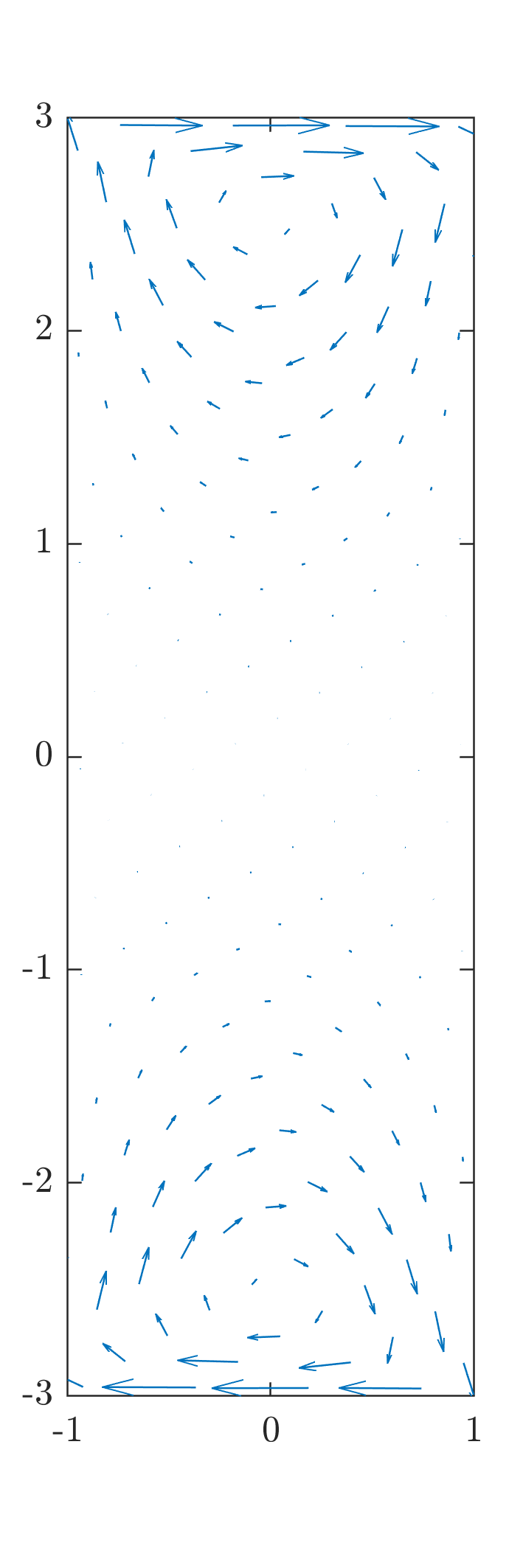}
	}
	\subfloat[][\mbox{Pressure}.\label{stokes_trimming:num_exp:fig12_press}]
	{
		\includegraphics[scale=0.5,,keepaspectratio=true]{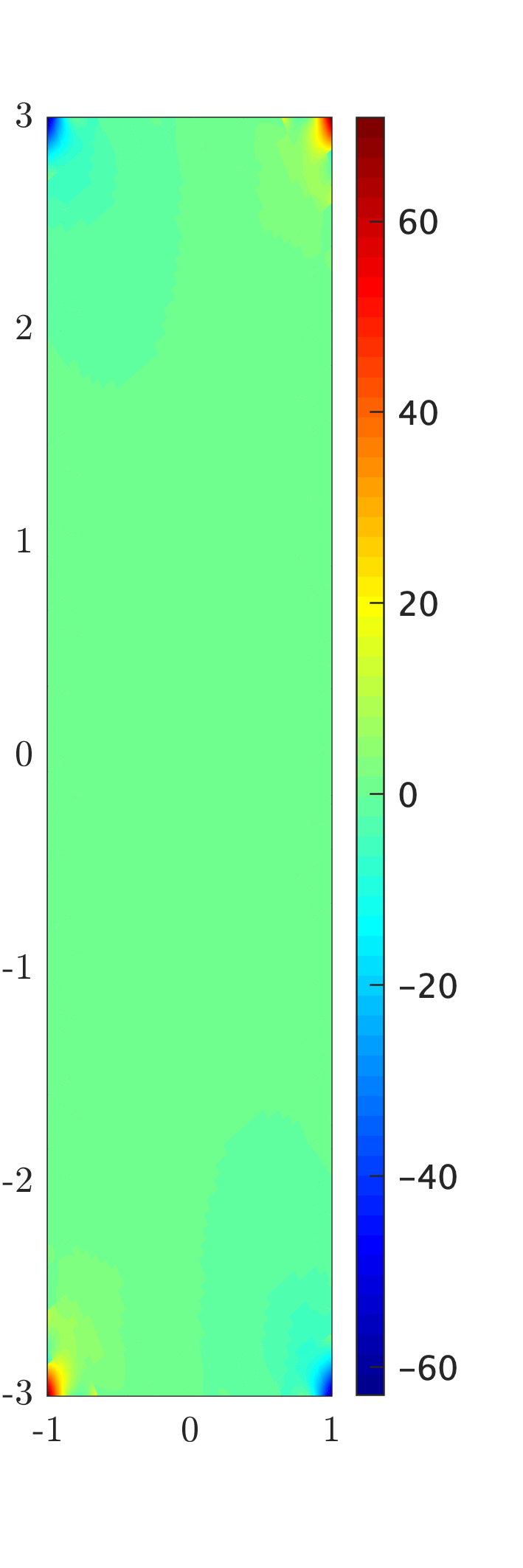}
	}
	\caption{Numerical solutions of the non-symmetric stabilized formulation~\eqref{stokes_trimming:stability:prob6_stab} for the \emph{lid-driven cavity} with the Taylor-Hood element.}\label{stokes_trimming:num_exp:fig12}
\end{figure}

\FloatBarrier

\appendix
\section{Useful inequalities} \label{sec:appendix}
In this section, we collect some technical results repeatedly employed throughout the manuscript. The constants that will appear in the inequalities below, unless otherwise specified, are intended to be robust with respect to the mesh size and mutual position between trimming curve and active physical B\'ezier mesh.
\begin{lemma}\label{appendix:lemma:disc_trace_ineq}
There exists $C>0$. depending on $\Gamma$, such that, for every $K\in\mathcal G_h$,
\begin{equation*}
\norm{v}^2_{L^2(\Gamma_K)}\le C \norm{v}_{L^2(K)}\norm{v}_{H^1(K)},\qquad\forall\ v\in H^1(K).
\end{equation*}	
\end{lemma}	
\begin{proof}
See, for instance, Lemma~3 in~\cite{MR1941489}, Lemma~3 in~\cite{MR2075053}, or Lemma~4.1 of~\cite{MR3407236}.
\end{proof}	
\begin{lemma}\label{appendix:51}
Let $Q,Q'\in\hat{\mathcal{M}}_{h}$ be neighbor elements in the sense of Definition~\ref{poisson_trimming:def_goodandbad}. There exists $C>0$ such that
\begin{equation*}
\norm{\varphi}_{L^\infty(Q)}\le C\norm{\varphi}_{L^\infty(Q')},\qquad\forall\ \varphi\in\mathbb{Q}_p(\R^d),
\end{equation*}
where $C$ depends on $p$, on the shape regularity of the mesh, and on the distance between $Q$ and $Q'$.
\end{lemma}
\begin{proof}
The proof follows by a scaling argument.	
\end{proof}
The next result says that the $L^2$-norm on the cut portion of an element $Q$ controls the $L^\infty$-norm (and hence any other) on the whole element with an equivalence constant depending on the relative measure of the cut portion.
\begin{lemma}\label{appendix:50}
Let $\theta\in (0,1]$. There exists $C>0$ such that, for every $Q\in\hat{\mathcal{M}}_{h}$ and every $S\subset Q$ measurable such that $\abs{S} \ge \theta\abs{Q} $, we have
\begin{equation*}
\norm{\varphi}_{L^\infty(Q)}\le C h^{-\frac{d}{2}}\norm{\varphi}_{L^2(S)},\qquad\forall\ \varphi\in\mathbb{Q}_p(\R^d),
\end{equation*}
where $C$ depends only on $\theta$, $p$, and the mesh regularity.
\end{lemma}
\begin{proof}
See Proposition~1 in~\cite{MR3806650}. 
\end{proof}
Let us recall a more standard inverse inequality with an explicit dependence on the polynomial degree $p$.
\begin{lemma}\label{appendix:lemma:schwab}
There exists $C>0$, depending on the shape regularity of the mesh, such that, for every $Q\in\hat{\mathcal M}_h$,
\begin{equation*}
\norm{\varphi}_{L^\infty(Q)}\le C p^d h_Q^{-\frac{d}{2}}\norm{\varphi}_{L^2(Q)},\qquad\forall\ \varphi \in\mathbb{Q}_p(Q).
\end{equation*}
\end{lemma}
\begin{proof}
We refer the interested reader to~\cite{MR1695813}. 
\end{proof}

\clearpage
\bibliographystyle{plain}
\bibliography{bibliography}
\end{document}